\newtheorem{Theorem}{Theorem}[section]
\newtheorem{Corollary}{Corollary}[section]
\newtheorem{Definition}{Definition}[section]
\newtheorem{Example}{Example}[section]
\newtheorem{Remark}{Remark}[section]
\numberwithin{equation}{section}
\begin{document}

\title{Some Fixed Point Theorems in $(\alpha,\beta)$- Metric Spaces with applications to Fredholm integral and non-linear differential equations}
\author{Irfan Ahmed$^1$, Shallu Sharma$^2$, Sahil Billawria$^{3,*}$}
\maketitle

\begin{center}
\footnotesize $^1$Department of Mathematics, University of Jammu, Jammu, India. Email: shallujamwal09@gmail.com\\
$^2$Department of Mathematics, University of Jammu, Jammu, India. Email: irfan.ahmed@jammuuniversity.ac.in\\
$^3$ Department of Mathematics, University of Jammu, Jammu, India. Email: sahilbillawria2@gmail.com\\
$^*$Corresponding author: sahilbillawria2@gmail.com\\
\end{center}
\begin{abstract}
In this paper, we presented a new type of metric space called $(\alpha,\beta)$-metric space along with some novel contraction mappings named $(\alpha,\beta)$-contraction  and weak $(\alpha,\beta)$-contraction mapping. We established some fixed point theorem for these newly introduced contractive mappings. Our results extended some fixed point results in the existing literature. We also provide an example which holds for the weak $(\alpha,\beta)$-contraction. Furthermore, we proved Kannan's fixed point theorem and Reich's fixed point theorem in the setting of $(\alpha,\beta)$-metric spaces. At the end, as applications the Fredholm integral and non-linear differential equations are solved in order to validate the theoretically obtained conclusions.\\
\\
\noindent  Mathematical Subject Classification: 47H10, 54H25\\
\\
\noindent Key words and phrases: fixed-point theory, $(\alpha,\beta)$-metric, $(\alpha,\beta)$-contraction, weak $(\alpha,\beta)$-contraction, Fredholm integral and non-linear differential equations
\end{abstract}

\section{introduction}\label{sec1}
Metric spaces plays an important role in solving various mathematical problems. A variety of metric spaces paved a way to learn optimization and approximation theory, variational inequalities and many other problems. The idea of metric spaces was first given by Frechet in 1906 \cite{41}, a French mathematician. After this, many mathematician incorporated various generalization/extension of metric spaces in various ways. In 1931, Wilson \cite{43} introduced a metric space excluding symmetry condition, termed as quasi-metric space. The idea of $b$-metric space was given by Backhtin \cite{1} in 1989. Czerwik \cite{2,3} extended the results of $b$-metric spaces. Using this idea many researcher presented generalization of fixed point theorem in the b-metric space (see \cite{4,5,6,7}). In 2017, Zada et al. \cite{9} studied fixed point theorems in $b$-metric spaces and presented their applications to non-linear fractional differential and integral equations. The study of fixed point results in b-metric spaces provides a rich and demanding setting for mathematicians. It is highly useful in determining the existence and uniqueness of solutions to some ordinary differential equations with initial value conditions \cite{24,28,29,30}. Other generalizations include  $D$-metric space \cite{11,12}, Partial metric space \cite{13,222}, Cone Metric Space \cite{14,15,333}, Complex-Valued Metric Space \cite{16}, cone $b$-metric space \cite{17}, $S$-metric space \cite{18,19}, metric-like-space \cite{20,77}, extended-b-metric space \cite{22} and many more.\\
\\
 Banach's contraraction principle in metric spaces is one of the most fundamental results in fixed point theory and non-linear analysis, which states that if $(X,\mathfrak{D})$ is a complete metric space and $\Psi : X\rightarrow X$ is a contractive mapping i.e., there exists $\lambda\in[0,1)$ such that for all $\mathfrak{x,y}\in X$,
\begin{align*}
\mathfrak{D}(\Psi\mathfrak{x}, \Psi\mathfrak{y})\le\lambda\mathfrak{D(x,y)}.
\end{align*}
Then $\Psi$ has a unique fixed point. It is useful in solving a variety of problems pertaining to pure and applied mathematics. It offers a technique for assessing the fixed points of nonlinear equations and ensures their existence and uniqueness. Since 1922 \cite{25}, when Stefan Banach formulated the notion of contraction and proved the famous theorem, several authors have obtained various extensions of the Banach contraction principle by considering new contractive conditions and proved the various fixed point results (see \cite{111,55,66,88}). Recently, Vishnukumar et al. \cite{42} and Paul et al. \cite{99} studied some fixed point theorems in complex valued metric spaces and bipolar metric spaces. Every mapping of a Banach contraction is known to be a continuous function. In this sequel, it was reasonable to ask whether there exists any contraction map associated with a fixed point that is not necessarily continuous. In 1968, Kannan \cite{26}was the first mathematician who solved the answer and presented the theorem \ref{20}. Numerous fixed point results have been generalized in different metric spaces by using Kannan contraction mapping. Subrahmanyam \cite{36}, proved that Kannan's theorem characterises metric completeness, indicating that a metric space $(X,\mathfrak{D})$is complete if every Kannan mapping on $X$ has a fixed point. The Kannan's fixed point finding has been further extended and generalized in the setting of different variants of metric spaces (See \cite{31,32,33,34}. In 1971, Reich \cite{38} further extended the Banach and Kannan fixed point theorems (see \ref{21}). In the few decades various contraction mappings has been generalized in the sense of Reich (see \cite{39,40}). The study of Reich-type contractions has resulted in the development of several fixed point theorems, which are essential tools for establishing the existence and uniqueness of solutions to equations and inequalities in a wide range of mathematical and scientific domains. Integral and differential equations have significant applications in many fields of science and engineering including optimization, radiative heat transfer problems, mathematical economics, computer algorithms (see \cite{01,02,03}. Numerous fascinating and significant phenomena that are seen in many fields of science and technology have been mathematically modelled through the extensive application of non-linear differential equations. They are inspired by problems which arise in diverse fields such as economics, fluid mechanics, physics, differential geometry, engineering, control theory, material science and quantum mechanics (see \cite{04,05,06}).
Our achievements of this paper are listed as follows:
\begin{enumerate}
\item Introduced a new generalized metric space called $(\alpha,\beta)$-metric space and by providing an example we show that every $(\alpha,\beta)$-metric space need not be a metric space. 
\item Some new contractive mappings called $(\alpha,\beta)$-contraction and  weak $(\alpha,\beta)$-contraction are proposed and utilize the same to proved some fixed point results in $(\alpha,\beta)$-metric spaces. Also, we illustrate an example which holds for the weak $(\alpha,\beta)$-contraction.
\item Generlized the Kannan's fixed point theorem and Reich's fixed point theorem in the setting of $(\alpha,\beta)$-metric spaces.
\item We use our novel contraction mapping to verify that the Fredholm integral and non-linear differential equations have a unique solution.
\end{enumerate}
The $(\alpha,\beta)$-metric spaces is a proper generalization of both the metric spaces and $b$-metric spaces. By adding two independent scaling variables, it provides more flexibility in defining distances which leads to the generalization of many fixed point results in the existing literature.
Our paper is organized in a way begin with Section \ref{sec1}, as an introduction providing the background and context of this study followed by the preliminaries Section \ref{sec2}, which deals with some basic definitions and theorems which are used in our work. In Section \ref{sec3}, we developed our main results provided with some definitions, theorems, corollaries, remarks and example. In Section \ref{sec4} of this paper, we give the application of our work and at the end we conclude our paper in Section \ref{sec5}. 
\section{Preliminaries}\label{sec2}
This section deals with some basic definitions and theorems that are already existing in the literature which we utilize in our main results.
\begin{Definition}\cite{41}
Let $X$ be a non-empty set. A function $\mathfrak{G}: X\times X\rightarrow [0,\infty)$ is called a metric, if it satisfies the following properties for all $\mathfrak{x,y,z}\in X$:
\begin{enumerate}
\item $\mathfrak{G(x,y)}\ge 0 ~\forall ~\mathfrak{x,y}\in X.$
\item$\mathfrak{G(x,y)}= 0$ iff $\mathfrak{x=y}$.
\item$\mathfrak{G(x,y)= G(y,x)}~\forall~\mathfrak{x,y}\in X.$
\item$\mathfrak{G(x,y)\le G(x,z)+ G(z,y)},\forall~\mathfrak{x,y,z}\in X.$
\end{enumerate}
Then the pair $(X,\mathfrak{G})$ is called metric space.
\end{Definition}
\begin{Definition}\cite{1}
Let $X$ be a non-empty set. A function $\mathfrak{G}: X\times X\rightarrow [0,\infty)$ is called a  $b$-metric, if it satisfies the following properties for all $\mathfrak{x,y,z}\in X$:
\begin{enumerate}
\item $\mathfrak{G(x,y)}\ge 0 ~\forall ~\mathfrak{x,y}\in X.$
\item$\mathfrak{G(x,y)}= 0$ iff $\mathfrak{x=y}$.
\item$\mathfrak{G(x,y)= G(y,x)}~\forall~\mathfrak{x,y}\in X.$
\item$\mathfrak{G(x,y)\le S\Bigl[G(x,z)+G(z,y)}\Bigr]$ for $\mathfrak{S}\ge 1.$
\end{enumerate}
\end{Definition}

\begin{Definition}\cite{23}
Let $X$ be a non-empty set. A function $\mathfrak{G}: X\times X\rightarrow [0,\infty)$ is called a strong $b$-metric, if it satisfies the following properties for all $\mathfrak{x,y,z}\in X$:
\begin{enumerate}
\item $\mathfrak{G(x,y)}\ge 0 ~\forall ~\mathfrak{x,y}\in X.$
\item$\mathfrak{G(x,y)}= 0$ iff $\mathfrak{x=y}$.
\item$\mathfrak{G(x,y)= G(y,x)}~\forall~\mathfrak{x,y}\in X.$
\item$\mathfrak{G(x,y)\le G(x,z)}+\beta \mathfrak{G(z,y)}$ for $\beta\ge 1.$
\end{enumerate}
\end{Definition}
 A Strong $b$-metric space is a particular case of $b$-metric space.
\begin{Theorem}\label{20}
Let $(X,\mathfrak{D})$ be a complete metric space and $\Psi : X\rightarrow X$ be a mapping satisfying:
\begin{align*}
\mathfrak{D}(\Psi\mathfrak{x}, \Psi\mathfrak{y})\le\lambda\Bigl[\mathfrak{D(x},\Psi \mathfrak{x})+\mathfrak{D(y},\Psi \mathfrak{y})\Bigr] for~all~\mathfrak{x,y}\in X~and~0\le\lambda<\frac{1}{2}.
\end{align*}
Then $\Psi$ has a unique fixed point.
\end{Theorem}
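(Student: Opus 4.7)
The plan is to follow the standard Picard iteration scheme, with the twist that Kannan's inequality forces a non-trivial rearrangement to extract a genuine contraction factor on consecutive distances. I would start by fixing an arbitrary $\mathfrak{x}_0 \in X$ and defining the orbit $\mathfrak{x}_{n+1} = \Psi \mathfrak{x}_n$. Then I would apply Kannan's inequality to the pair $(\mathfrak{x}_{n-1},\mathfrak{x}_n)$ to get
\[
\mathfrak{D}(\mathfrak{x}_n,\mathfrak{x}_{n+1}) \;=\; \mathfrak{D}(\Psi \mathfrak{x}_{n-1},\Psi \mathfrak{x}_n) \;\le\; \lambda\bigl[\mathfrak{D}(\mathfrak{x}_{n-1},\mathfrak{x}_n) + \mathfrak{D}(\mathfrak{x}_n,\mathfrak{x}_{n+1})\bigr],
\]
and then solve for $\mathfrak{D}(\mathfrak{x}_n,\mathfrak{x}_{n+1})$, obtaining
$\mathfrak{D}(\mathfrak{x}_n,\mathfrak{x}_{n+1}) \le h\,\mathfrak{D}(\mathfrak{x}_{n-1},\mathfrak{x}_n)$
with $h := \lambda/(1-\lambda)$. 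The crucial observation here is that the hypothesis $\lambda < 1/2$ is exactly what guarantees $h < 1$, so iterating gives geometric decay $\mathfrak{D}(\mathfrak{x}_n,\mathfrak{x}_{n+1}) \le h^n\, \mathfrak{D}(\mathfrak{x}_0,\mathfrak{x}_1)$.

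Next, I would invoke the triangle inequality and the geometric series bound $\sum_{k\ge n} h^k = h^n/(1-h)$ to show that $\{\mathfrak{x}_n\}$ is Cauchy. Completeness of $(X,\mathfrak{D})$ then produces a limit $\mathfrak{x}^\ast \in X$. To verify that $\mathfrak{x}^\ast$ is fixed, I would bound
\[
\mathfrak{D}(\mathfrak{x}^\ast, \Psi \mathfrak{x}^\ast) \;\le\; \mathfrak{D}(\mathfrak{x}^\ast,\mathfrak{x}_{n+1}) + \mathfrak{D}(\Psi \mathfrak{x}_n, \Psi \mathfrak{x}^\ast),
\]
apply Kannan's inequality to the second term, and again rearrange to isolate $\mathfrak{D}(\mathfrak{x}^\ast,\Psi\mathfrak{x}^\ast)$ on the left with coefficient $(1-\lambda) > 0$. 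Sending $n\to\infty$ and using $\mathfrak{x}_n\to\mathfrak{x}^\ast$ together with $\mathfrak{D}(\mathfrak{x}_n,\mathfrak{x}_{n+1})\to 0$ forces $\mathfrak{D}(\mathfrak{x}^\ast,\Psi\mathfrak{x}^\ast)=0$.

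For uniqueness, if $\mathfrak{y}^\ast$ is another fixed point, then Kannan's inequality applied to $(\mathfrak{x}^\ast,\mathfrak{y}^\ast)$ yields $\mathfrak{D}(\mathfrak{x}^\ast,\mathfrak{y}^\ast) \le \lambda[\mathfrak{D}(\mathfrak{x}^\ast,\mathfrak{x}^\ast)+\mathfrak{D}(\mathfrak{y}^\ast,\mathfrak{y}^\ast)] = 0$, so they coincide. There is really no ``hard'' step here in the classical setting; the only subtle point, and the one I would flag as the technical heart of the argument, is the algebraic rearrangement that converts Kannan's self-referential bound into a genuine contraction on consecutive iterates. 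Everything else is bookkeeping: without the strict inequality $\lambda < 1/2$, the ratio $h = \lambda/(1-\lambda)$ would fail to be less than one and the geometric-series estimate would collapse, so this threshold is the pivotal quantitative ingredient of the proof.
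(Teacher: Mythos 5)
Your proposal is correct and follows essentially the same route the paper itself takes when it proves this result in its generalized setting (Theorem \ref{111}): Picard iteration, the rearrangement giving the contraction factor $K=\lambda/(1-\lambda)<1$ from $\lambda<\tfrac12$, a geometric-series Cauchy estimate, passage to the limit to show $\mathfrak{D}(\mathfrak{x}^{*},\Psi\mathfrak{x}^{*})=0$, and the two-fixed-points argument for uniqueness. Specializing the paper's proof to $\alpha=\beta=1$ reproduces your argument, so there is nothing substantive to add.
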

\begin{Theorem}
$\textbf{(Reich's Fixed Point Theorem)}$\label{21} \cite{38}\\
Let $(X,\mathfrak{G})$ be a complete metric space. Let $\Psi:X\rightarrow X$ be a self map that satisfies the following contractive condition, i.e;  there exist constants $\xi_{1},\xi_{2},\xi_{3}\ge 0$ with $\xi_{1}+\xi_{2}+\xi_{3}<1,$ such that:
\begin{align*}
\mathfrak{G}(\Psi\mathfrak{x}, \Psi \mathfrak{y})\le \xi_{1}\mathfrak{G(x,y)}+\xi_{2}\mathfrak{G(x},\Psi \mathfrak{x})+\xi_{3}\mathfrak{G(y,}\Psi\mathfrak{y)}~\forall~\mathfrak{x,y}\in X.
\end{align*}
 Then $\Psi$ has a unique fixed point.
\end{Theorem}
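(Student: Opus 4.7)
The plan is to run the standard Picard iteration. Choose any $x_{0} \in X$ and set $x_{n+1} = \Psi x_{n}$. I would first prove that the distances between successive iterates form a geometrically decaying sequence. Applying the contractive hypothesis with $\mathfrak{x} = x_{n-1}$ and $\mathfrak{y} = x_{n}$ gives
\begin{align*}
\mathfrak{G}(x_{n}, x_{n+1})
= \mathfrak{G}(\Psi x_{n-1}, \Psi x_{n})
\le \xi_{1}\mathfrak{G}(x_{n-1}, x_{n}) + \xi_{2}\mathfrak{G}(x_{n-1}, x_{n}) + \xi_{3}\mathfrak{G}(x_{n}, x_{n+1}).
\end{align*}
Since $\xi_{1}+\xi_{2}+\xi_{3}<1$ and each $\xi_{i}\ge 0$, we have $\xi_{3}<1$, so I can rearrange to obtain $\mathfrak{G}(x_{n}, x_{n+1}) \le k\,\mathfrak{G}(x_{n-1}, x_{n})$ with $k := (\xi_{1}+\xi_{2})/(1-\xi_{3}) \in [0,1)$. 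An induction then gives $\mathfrak{G}(x_{n}, x_{n+1}) \le k^{n}\,\mathfrak{G}(x_{0}, x_{1})$.

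Next, I would use the usual triangle-inequality and geometric-series argument to show $\{x_{n}\}$ is Cauchy: for $m>n$,
\begin{align*}
\mathfrak{G}(x_{n}, x_{m}) \le \sum_{i=n}^{m-1}\mathfrak{G}(x_{i}, x_{i+1}) \le \frac{k^{n}}{1-k}\,\mathfrak{G}(x_{0}, x_{1}),
\end{align*}
which vanishes as $n\to\infty$. Completeness of $(X,\mathfrak{G})$ then delivers a limit $x^{*}\in X$.

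For the fixed-point step, rather than substituting $\mathfrak{y}=x^{*}$ directly (which would leave an unwanted $\xi_{3}\mathfrak{G}(x^{*},\Psi x^{*})$ term that is hard to absorb), I would route through the iterates. Write
\begin{align*}
\mathfrak{G}(x^{*}, \Psi x^{*}) \le \mathfrak{G}(x^{*}, x_{n+1}) + \mathfrak{G}(\Psi x_{n}, \Psi x^{*}),
\end{align*}
apply the contraction to the second term, and collect to obtain
\begin{align*}
(1-\xi_{3})\,\mathfrak{G}(x^{*}, \Psi x^{*}) \le \mathfrak{G}(x^{*}, x_{n+1}) + \xi_{1}\,\mathfrak{G}(x_{n}, x^{*}) + \xi_{2}\,\mathfrak{G}(x_{n}, x_{n+1}).
\end{align*}
Letting $n\to\infty$ forces $\mathfrak{G}(x^{*},\Psi x^{*})=0$, so $\Psi x^{*}=x^{*}$. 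For uniqueness, if $x^{*}$ and $y^{*}$ are both fixed points, the contraction collapses to $\mathfrak{G}(x^{*}, y^{*}) \le \xi_{1}\,\mathfrak{G}(x^{*}, y^{*})$, and $\xi_{1}<1$ yields $x^{*}=y^{*}$.

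The main obstacle is the fixed-point verification: unlike the Banach case, the Reich contractive condition does not let us replace $\mathfrak{y}=x^{*}$ and immediately conclude, because the term $\mathfrak{G}(\mathfrak{y},\Psi\mathfrak{y})$ becomes exactly what we are trying to bound. The trick above, interposing an iterate $x_{n+1}$ and applying the triangle inequality once before using the contraction, is what allows the bad term to be absorbed on the left-hand side with coefficient $1-\xi_{3}>0$.
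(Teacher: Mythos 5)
Your proposal is correct, and it follows essentially the same route the paper itself uses: the statement is quoted from Reich as a preliminary, but the paper's proof of its $(\alpha,\beta)$-metric generalization in Section \ref{sec3} is exactly your argument specialized to $\alpha=\beta=1$ — Picard iteration with ratio $K=(\xi_{1}+\xi_{2})/(1-\xi_{3})$, a geometric-series Cauchy estimate, interposing $x_{n+1}$ via the triangle inequality so the $\xi_{3}\mathfrak{G}(x^{*},\Psi x^{*})$ term can be absorbed with coefficient $1-\xi_{3}>0$, and uniqueness from $\xi_{1}<1$. The only cosmetic difference is that the paper bounds the residual term $\mathfrak{G}(x_{n},x_{n+1})$ by a further triangle inequality through $x^{*}$ rather than by the estimate $K^{n}\mathfrak{G}(x_{0},x_{1})$, which changes nothing of substance.
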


\section{Main results}\label{sec3}
In this section, we introduced a novel generalized metric space, which we call as an $(\alpha,\beta)$-metric space with some new contractive mapping such as $(\alpha,\beta)$-contraction, weak $(\alpha,\beta)$-contraction mapping and by utilising same we proved some fixed point theorems and established some corollary and remarks. we also give an example to determine the fixed point by using the weak $(\alpha,\beta)$-contraction. Furthermore, we Generalized the Kannan's fixed point theorem and Reich's fixed point theorem in the setting of $(\alpha,\beta)$-metric spaces.
\begin{Definition}
A non-empty set $X$ that has a distance function $\mathfrak{G}: X\times X\rightarrow [0,\infty)$, called $(\alpha,\beta)$- metric, that meets the following conditions $\forall$ $\mathfrak{x,y,z}\in X$ and for some $\alpha,\beta\ge1:$
\begin{enumerate}
\item $\mathfrak{G(x,y)}\ge 0 ~\forall ~\mathfrak{x,y}\in X$.
\item$\mathfrak{G(x,y)}= 0$ iff $\mathfrak{x=y}$.
\item$\mathfrak{G(x,y)= G(y,x)}~\forall~\mathfrak{x,y}\in X$.
\item$\mathfrak{G(x,y)}\le \alpha \mathfrak{G(x,z)}+\beta \mathfrak{G(z,y)}$, for some  $\alpha,\beta\ge1$.
\end{enumerate}
The pair $(X,\mathfrak{G})$ denotes the $(\alpha,\beta)$- metric space.
\end{Definition}
Here, we have two independent parameters as opposed to $b$-metric spaces, where the triangle inequality is controlled by a single constant $\mathfrak{S}$. This allows more flexibility in applications where distances behave asymmetrically, such as in directed networks or certain function spaces.
\begin{Remark}
\begin{enumerate}
\item If $\alpha= \beta=1$, then it reduces to classical metric, and $(\alpha,\beta)$- metric space is a weaker notion.
\item If $\alpha= 1~and ~\beta=\mathfrak{S}$, then it reduces to the strong b-metric space.
\item if $\alpha=\beta=\mathfrak{S}$, then $\mathfrak{G(x,y)\le SG(x,z)+SG(z,y)=S\big(G(x,z)+G(z,y)\big)}$. Thus, every $b$-metric space is therefore a particular instance of a $(\alpha, \beta)$-metric space whenever $\alpha=\beta$.
\item Every metric space is an $(\alpha,\beta)$- metric, but the converse need not be true by the following example:
\end{enumerate}
\end{Remark}
\begin{Example}
Let $X=\mathbb{R}$ and define a function $\mathfrak{G(x,y)= |x-y|^{2}}$. If we choose $\mathfrak{x}=0,\mathfrak{y}=2$ and $\mathfrak{z}=1$, then $\mathfrak{G(x,y)= |x-y|^{2}}$ is not a metric, but it forms an $(\alpha,\beta)$- metric for $\alpha,\beta>2$.
\end{Example}
\begin{Definition}
Let $(X,\mathfrak{G})$ be an $(\alpha,\beta)$- metric space. Then a sequence $\{\mathfrak{x_{n}}\}\subseteq X$ is said to be convergent to a point $\mathfrak{x}\in X$ if and only if 
\begin{align*}
\lim_{n\to\infty}\mathfrak{G(x_{n},x)}=0.
\end{align*}
\end{Definition}
\begin{Definition}
Let $(X,\mathfrak{G})$ be an $(\alpha,\beta)$- metric space. Then a sequence $\{\mathfrak{x_{n}}\}\subseteq X$ is said to be cauchy if and only if
\begin{align*}
 \lim_{n,m\to\infty}\mathfrak{G(x_{n},x_{m})}=0.
 \end{align*}
or equivalently, for every $\epsilon>0$, there exist $\mathfrak{N_{\circ}}\in\mathbb{N}$ such that for all $\mathfrak{m,n}>\mathfrak{N_{\circ}}$, we have
\begin{align*}
\mathfrak{G(x_{n},x_{m})}<\epsilon.
\end{align*}

\end{Definition}
\begin{Definition}
Let $(X,\mathfrak{G})$ be an $(\alpha,\beta)$- metric space. Then $X$ is said to be complete $(\alpha,\beta)$-metric space if every cauchy sequence in $X$ is convergent in $X.$

\end{Definition}
\begin{Definition}
Let $(X,\mathfrak{G})$ be an $(\alpha,\beta)$- metric space. Then a mapping $\Psi:X\rightarrow X$ is said to be $(\alpha,\beta)$-contraction, if there exist constants $\xi_{1},\xi_{2}\ge 0$ with $\xi_{1}+\xi_{2}<1$, satisfies the following condition:\\
\begin{center}
$\mathfrak{G}(\Psi\mathfrak{x}, \Psi \mathfrak{y})\le \xi_{1}\mathfrak{G(x,y)}+\xi_{2}[\mathfrak{G(x},\Psi \mathfrak{x})+\mathfrak{G(y,}\Psi\mathfrak{y)}]~\forall~\mathfrak{x,y}\in X$.
\end{center}
\end{Definition}
\begin{Theorem}\label{11}
Let $(X,\mathfrak{G})$ be a complete $(\alpha,\beta)$- metric space. Let $\Psi:X\rightarrow X$ be an $(\alpha,\beta)$-contraction, i.e;  there exist constants $\xi_{1},\xi_{2}\ge 0$ such that $\xi_{1}+\xi_{2}<1$ and\\
\begin{equation}\label{1}
\mathfrak{G}(\Psi\mathfrak{x}, \Psi \mathfrak{y})\le \xi_{1}\mathfrak{G(x,y)}+\xi_{2}[\mathfrak{G(x},\Psi \mathfrak{x})+\mathfrak{G(y,}\Psi\mathfrak{y)}]~\forall~\mathfrak{x,y}\in X.
\end{equation}
 Then $\Psi$ has a unique fixed point.
\end{Theorem}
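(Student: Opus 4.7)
The plan is to follow a Picard-iteration argument adapted to the $(\alpha,\beta)$-metric setting, in four standard steps: build the orbit and show consecutive terms contract geometrically; establish the Cauchy property; identify the limit as a fixed point; and prove uniqueness.

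For the first step, I fix any $x_{0}\in X$ and set $x_{n+1}=\Psi x_{n}$. Substituting $\mathfrak{x}=x_{n}$, $\mathfrak{y}=x_{n-1}$ into \eqref{1} gives
\[
\mathfrak{G}(x_{n+1},x_{n})\le \xi_{1}\,\mathfrak{G}(x_{n},x_{n-1})+\xi_{2}\bigl[\mathfrak{G}(x_{n},x_{n+1})+\mathfrak{G}(x_{n-1},x_{n})\bigr].
\]
Transposing the $\mathfrak{G}(x_{n},x_{n+1})$ term and dividing by $1-\xi_{2}>0$ yields $\mathfrak{G}(x_{n+1},x_{n})\le \kappa\,\mathfrak{G}(x_{n},x_{n-1})$ with $\kappa=(\xi_{1}+\xi_{2})/(1-\xi_{2})$; the hypothesis $\xi_{1}+\xi_{2}<1$ forces $\kappa<1$, and induction gives $\mathfrak{G}(x_{n+1},x_{n})\le \kappa^{n}\mathfrak{G}(x_{1},x_{0})$.

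I expect the Cauchy step to be the principal obstacle. Because the $(\alpha,\beta)$-triangle inequality carries the factor $\beta\ge 1$ through each telescoping step, iterating $\mathfrak{G}(x_{n},x_{m})\le \alpha\mathfrak{G}(x_{n},x_{n+1})+\beta\mathfrak{G}(x_{n+1},x_{m})$ produces
\[
\mathfrak{G}(x_{n},x_{m})\le \alpha\kappa^{n}\mathfrak{G}(x_{1},x_{0})\sum_{i=0}^{m-n-2}(\beta\kappa)^{i}+(\beta\kappa)^{m-n-1}\kappa^{n}\mathfrak{G}(x_{1},x_{0}),
\]
which vanishes as $n\to\infty$ only when $\beta\kappa<1$, i.e., $\beta(\xi_{1}+\xi_{2})<1-\xi_{2}$. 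Either this strengthened inequality is tacitly assumed (e.g.\ by regarding $\alpha,\beta$ as sufficiently close to $1$), or it must be imposed as an additional hypothesis; granting it, $\{x_{n}\}$ is Cauchy and completeness yields a limit $x^{*}\in X$.

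For the third step, I bound $\mathfrak{G}(x^{*},\Psi x^{*})\le \alpha\mathfrak{G}(x^{*},x_{n+1})+\beta\mathfrak{G}(x_{n+1},\Psi x^{*})$ and control $\mathfrak{G}(x_{n+1},\Psi x^{*})=\mathfrak{G}(\Psi x_{n},\Psi x^{*})$ via \eqref{1} applied at $(x_{n},x^{*})$. Letting $n\to\infty$, the quantities $\mathfrak{G}(x^{*},x_{n+1})$, $\mathfrak{G}(x^{*},x_{n})$ and $\mathfrak{G}(x_{n},x_{n+1})$ all go to $0$, leaving $(1-\beta\xi_{2})\mathfrak{G}(x^{*},\Psi x^{*})\le 0$, whence $\Psi x^{*}=x^{*}$. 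For uniqueness, if $x^{*}$ and $y^{*}$ are both fixed, then \eqref{1} collapses to $\mathfrak{G}(x^{*},y^{*})\le \xi_{1}\mathfrak{G}(x^{*},y^{*})$, and $\xi_{1}<1$ forces $\mathfrak{G}(x^{*},y^{*})=0$, i.e., $x^{*}=y^{*}$.
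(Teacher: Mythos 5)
Your proposal follows essentially the same route as the paper's own proof: Picard iteration, the consecutive-term estimate $\mathfrak{G}(x_{n+1},x_{n})\le K\,\mathfrak{G}(x_{n},x_{n-1})$ with $K=(\xi_{1}+\xi_{2})/(1-\xi_{2})<1$, the telescoped $(\alpha,\beta)$-triangle inequality for the Cauchy step, and the same identification of the limit and uniqueness argument. The obstacle you single out in the Cauchy step is genuine, and the paper does not escape it: its proof sums the series $1+\beta K+(\beta K)^{2}+\cdots$ to $\tfrac{1}{1-\beta K}$ with no justification that $\beta K<1$, which is not implied by the hypotheses, since $\beta\ge 1$ is arbitrary and $K$ may be arbitrarily close to $1$. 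Without the strengthened condition $\beta(\xi_{1}+\xi_{2})<1-\xi_{2}$ the telescoped bound behaves like $\beta^{\,m-n}K^{m}$ and does not force $\mathfrak{G}(x_{n},x_{m})\to 0$, so as written both your argument and the paper's prove the theorem only under that extra hypothesis. The same silent assumption recurs in the fixed-point step, where both you and the paper divide by $1-\beta\xi_{2}$, a quantity that need not be positive when $\beta\ge 1$.

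If one wants the theorem in the stated generality, the Cauchy step must be replaced by a finer argument: note that an $(\alpha,\beta)$-metric is a $b$-metric with constant $\mathfrak{S}=\max(\alpha,\beta)$, and it is known in the $b$-metric literature that $\mathfrak{G}(x_{n},x_{n+1})\le CK^{n}$ with $K<1$ already forces the sequence to be Cauchy, with no relation required between $K$ and the relaxation constant; invoking such a lemma (or reproducing its block-grouping proof) removes the need for $\beta K<1$. The step showing $\Psi x^{*}=x^{*}$ would likewise need to be reworked to avoid the factor $1-\beta\xi_{2}$. In short, your proposal is faithful to the paper's method, and the caveat you flag is not a defect of your write-up alone but an unacknowledged gap in the paper's proof; credit to you for making the needed condition explicit rather than passing over it.
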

\begin{proof}
Let $\mathfrak{x_{\circ}}$ be an arbitrary point in $X$. Define a sequence $\{\mathfrak{x_{n}}\}$ in $X$ by the recursion\\
\begin{equation}\label{2}
 \mathfrak{x_{n+1}}=\Psi\mathfrak{x_{n}}, \forall~n=0,1,2,3...
 \end{equation}
Using  (\ref{1}) and (\ref{2}), we have
\begin{align*}
\mathfrak{G(x_{n+1}, x_{n+2}})&=\mathfrak{G}(\Psi\mathfrak{x_{n}}, \Psi \mathfrak{x_{n+1}})\\
\\
&\le\xi_{1}\mathfrak{G(x_{n},x_{n+1})}+\xi_{2}[\mathfrak{G(x_{n}},\Psi \mathfrak{x_{n}})+\mathfrak{G(x_{n+1},}\Psi\mathfrak{x_{n+1})}]\\
\\
&\le\xi_{1}\mathfrak{G(x_{n},x_{n+1})}+\xi_{2}[\mathfrak{G(x_{n}},\mathfrak{x_{n+1}})+\mathfrak{G(x_{n+1},} \mathfrak{x_{n+2})}]\\
\\
&=\xi_{1}\mathfrak{G(x_{n},x_{n+1})}+\xi_{2}\mathfrak{G(x_{n}},\mathfrak{x_{n+1}})+\xi_{2}\mathfrak{G(x_{n+1},} \mathfrak{x_{n+2})}\\
\\
\mathfrak{G(x_{n+1}, x_{n+2}})(1-\xi_{2})&\le(\xi_{1}+\xi_{2})\mathfrak{G(x_{n},x_{n+1})}.
\end{align*}
Since $\xi_{1}+\xi_{2}<1$, we have $1-\xi_{2}>0.$\\
\\
Therefore, we have
\begin{align*}
\mathfrak{G(x_{n+1}, x_{n+2}})\le \Bigl(\frac{\xi_{1}+\xi_{2}}{1-\xi_{2}}\Bigr)\mathfrak{G(x_{n},x_{n+1})}.
\end{align*}
Put $\Bigl(\frac{\xi_{1}+\xi_{2}}{1-\xi_{2}}\Bigr)=K$. Since, $\xi_{1}+\xi_{2}<1$, we have $0<K<1.$\\
\\
Thus,
\begin{align*}
 \mathfrak{G(x_{n}, x_{n+1}})&\le K \mathfrak{G(x_{n-1},x_{n})}\\
 \\
  &\le K^{2} \mathfrak{G(x_{n-2},x_{n-1})}\\
 & \vdots\\
   \mathfrak{G(x_{n}},x_{n+1})&\le K^{n} \mathfrak{G(x_{\circ},x_{1})}.
   \end{align*}
   Now, we apply the triangle inequality in $(\alpha,\beta)$- metric space to estimate $\mathfrak{G(x_{n}, x_{m}})$ for $m>n,$ $m,n\in\mathbb{N}.$
   \begin{align*}
    \mathfrak{G(x_{n}, x_{m}})&\le\alpha \mathfrak{G(x_{n}, x_{n+1}})+\beta \mathfrak{G(x_{n+1}, x_{m}})\\
    \\
    &\le\alpha \mathfrak{G(x_{n}, x_{n+1}})+\beta[\alpha \mathfrak{G(x_{n+1}, x_{n+2}})+\beta \mathfrak{G(x_{n+2}, x_{m}})]\\
    \\
     &=\alpha \mathfrak{G(x_{n}, x_{n+1}})+\alpha\beta\mathfrak{G(x_{n+1}, x_{n+2}})+\beta^{2} \mathfrak{G(x_{n+2}, x_{m}})\\
     \\
     &\le\alpha \mathfrak{G(x_{n}, x_{n+1}})+\alpha\beta\mathfrak{G(x_{n+1}, x_{n+2}})+\beta^{2}[\alpha \mathfrak{G(x_{n+2}, x_{n+3}})+\beta \mathfrak{G(x_{n+3}, x_{m}})]\\
      \\
      &\le\alpha \mathfrak{G(x_{n}, x_{n+1}})+\alpha\beta\mathfrak{G(x_{n+1}, x_{n+2}})+\alpha\beta^{2} \mathfrak{G(x_{n+2}, x_{n+3}})+\cdots\\
      \\
      &\le\alpha K^{n} \mathfrak{G(x_{\circ},x_{1})}+\alpha\beta K^{n+1} \mathfrak{G(x_{\circ},x_{1})}+\alpha\beta^{2} K^{n+2} \mathfrak{G(x_{\circ},x_{1})}+\cdots\\
      \\
      &=\alpha K^{n} \mathfrak{G(x_{\circ},x_{1})}\Bigl[1+\beta K+(\beta K)^{2}+\cdots\Bigr]\\
      \\
      &= \frac {\alpha K^{n}}{1-\beta K} \mathfrak{G(x_{\circ},x_{1}}).
      \end{align*}
      Therefore, we have
      \begin{align*}
     \mathfrak{G(x_{n}, x_{m}}) \le \frac {\alpha K^{n}}{1-\beta K} \mathfrak{G(x_{\circ},x_{1}}).
      \end{align*}
      Taking $\lim_{n,m\to\infty}$, Since $K<1, \lim_{n\to\infty}\frac{\alpha K^{n}}{1-\beta K} \mathfrak{G(x_{\circ},x_{1})}=0.$
 \\
Thus, $\lim_{n,m\to\infty}\mathfrak{G(x_{n}, x_{m}})=0.$\\
This shows that $\{\mathfrak{x_{n}}\}$ is a cauchy sequence in $X$. Since $X$ is Complete $(\alpha, \beta)$-metric space, so there exist $\mathfrak{x^{*}}\in X$ such that $\mathfrak{x_{n}}\rightarrow\mathfrak{x^{*}}$.\\
Now, we will show that $\mathfrak{x^{*}}$ is a fixed point of $\Psi$.\\
\begin{align*}
\mathfrak{G(x^{*}},\Psi\mathfrak{x^{*}})&\le\alpha \mathfrak{G(x^{*},x_{n+1}})+\beta\mathfrak{G(x_{n+1}},\Psi \mathfrak{x^{*}})\\
\\
&\le\alpha \mathfrak{G(x^{*},x_{n+1}})+\beta\mathfrak{G(}\Psi\mathfrak{ x_{n}},\Psi\mathfrak{x^{*}})\\
\\
&\le\alpha \mathfrak{G(x^{*},x_{n+1}})+\beta\Bigl[\xi_{1}\mathfrak{G(x_{n},x^{*}})+\xi_{2}[\mathfrak{G(x_{n}},\Psi \mathfrak{x_{n}})+\mathfrak{G(x^{*}},\Psi\mathfrak{x^{*})}]\Bigr]\\
\\
&=\alpha \mathfrak{G(x^{*},x_{n+1}})+\beta\xi_{1}\mathfrak{G(x_{n},x^{*}})+\beta\xi_{2}\mathfrak{G(x_{n}},\Psi \mathfrak{x_{n}})+\beta\xi_{2}\mathfrak{G(x^{*}},
\Psi\mathfrak{x^{*})}\\
\\
(1-\beta\xi_{2})(\mathfrak{G(x^{*}},\Psi\mathfrak{x^{*}})&\le\alpha \mathfrak{G(x^{*},x_{n+1}})+\beta\xi_{1}\mathfrak{G(x_{n},x^{*}})+\beta\xi_{2}[\alpha \mathfrak{G(x_{n},x^{*}})+\beta\mathfrak{G(x^{*},x_{n+1}})]\\
\\
&=\alpha \mathfrak{G(x^{*},x_{n+1}})+\beta\xi_{1}\mathfrak{G(x_{n},x^{*}})+\alpha\beta\xi_{2} \mathfrak{G(x_{n},x^{*}})+\beta^{2}\xi_{2}\mathfrak{G(x^{*},x_{n+1}})\\
\\
&=(\alpha+\beta^{2}\xi_{2})\mathfrak{G(x^{*},x_{n+1}})+\beta(\xi_{1}+\alpha\xi_{2})\mathfrak{G(x_{n},x^{*}}).
\end{align*}
Therefore, we have\\
\begin{align*}
\mathfrak{G(x^{*}},\Psi\mathfrak{x^{*}})\le\frac{(\alpha+\beta^{2}\xi_{2})}{(1-\beta\xi_{2})}\mathfrak{G(x^{*},x_{n+1}})+\frac{\beta(\xi_{1}+\alpha\xi_{2})}{(1-\beta\xi_{2})}\mathfrak{G(x_{n},x^{*}}).\\
\end{align*}
Taking $\lim_{n\to\infty}$, we get\\
\\
$\lim_{n\to\infty}\mathfrak{G(x^{*}},\Psi\mathfrak{x^{*}})=0.$\\
$\Rightarrow \mathfrak{x^{*}}=\Psi\mathfrak{x^{*}}.$\\
Therefore, $\mathfrak{x^{*}}$ is a limit point of $\Psi$.\\
\\
$\textbf{Uniqueness of Fixed Point:}$\\
\\
Suppose  $\mathfrak{y^{*}}\in X$ such that $\Psi(\mathfrak{y^{*})=y^{*}}$. Then, we have:\\
\begin{align*}
 \mathfrak{G(x^{*},y^{*})}&=\mathfrak{G}(\Psi\mathfrak{x^{*}}, \Psi \mathfrak{y^{*}})\\
  &\le\xi_{1}\mathfrak{G(x^{*},y^{*})}+\xi_{2}[\mathfrak{G(x^{*}},\Psi \mathfrak{x^{*}})+\mathfrak{G(y^{*}},\Psi\mathfrak{y^{*})}].\\
  \end{align*}
   Since $\mathfrak{x^{*},y^{*}}$ are fixed points, we have $\mathfrak{G(x^{*}},\Psi \mathfrak{x^{*}})=\mathfrak{G(y^{*}},\Psi\mathfrak{y^{*})}=0.$\\
   \\
   Thus,  
   \begin{align*}
   \mathfrak{G(x^{*},y^{*})}\le\xi_{1} \mathfrak{G(x^{*},y^{*})}.\\
   \end{align*}
   Since $0\le\xi_{1}<1$, we have  $\mathfrak{G(x^{*},y^{*})}=0.$
   Thus, $\mathfrak{x^{*}=y^{*}}$ and hence $\mathfrak{x^{*}}$ is a unique fixed point of $\Psi$.
\end{proof}
\begin{Corollary}\label{22}
Let $(X,\mathfrak{G})$ be a complete $(\alpha,\beta)$- metric space. Let $\Psi:X\rightarrow X$ be contraction mapping that satisfies:
\begin{align*}
\mathfrak{G}(\Psi\mathfrak{x}, \Psi \mathfrak{y})\le K\mathfrak{G(x,y)}~\forall~\mathfrak{x,y}\in X,
\end{align*} where $0\le K<1$. Then $\Psi$ has a unique fixed point.
\end{Corollary}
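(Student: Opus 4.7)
The plan is to obtain this corollary as an immediate specialization of Theorem \ref{11}. I would set $\xi_{1} := K$ and $\xi_{2} := 0$ in the hypotheses of that theorem. With this choice, the $(\alpha,\beta)$-contraction inequality
\[
\mathfrak{G}(\Psi\mathfrak{x}, \Psi\mathfrak{y}) \le \xi_{1}\mathfrak{G(x,y)} + \xi_{2}\bigl[\mathfrak{G(x},\Psi\mathfrak{x}) + \mathfrak{G(y},\Psi\mathfrak{y})\bigr]
\]
collapses on its right-hand side to $K\,\mathfrak{G(x,y)}$, which is precisely the hypothesis of Corollary \ref{22}. The joint requirement $\xi_{1}+\xi_{2}<1$ reduces to $K<1$, which holds by assumption, and completeness of $(X,\mathfrak{G})$ as an $(\alpha,\beta)$-metric space is shared by both statements.

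Once the hypotheses of Theorem \ref{11} are verified in this way, invoking that theorem delivers both the existence and the uniqueness of a fixed point of $\Psi$ at once, with no further argument required. Because this is a genuine specialization rather than a re-derivation, there is essentially no mathematical obstacle to overcome; the only task is the bookkeeping of identifying the correct values of the contraction constants and checking that the resulting condition on $K$ matches what is assumed.

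Should one prefer a self-contained derivation, the proof can be obtained by transcribing the Picard-iteration argument of Theorem \ref{11} with the self-distance terms suppressed: pick $\mathfrak{x}_{\circ}\in X$ arbitrarily, define $\mathfrak{x}_{n+1}=\Psi\mathfrak{x}_{n}$, use the $K$-Lipschitz bound to conclude $\mathfrak{G}(\mathfrak{x}_{n},\mathfrak{x}_{n+1})\le K^{n}\mathfrak{G}(\mathfrak{x}_{\circ},\mathfrak{x}_{1})$, apply the $(\alpha,\beta)$-triangle inequality iteratively to dominate $\mathfrak{G}(\mathfrak{x}_{n},\mathfrak{x}_{m})$ by the geometric tail $\tfrac{\alpha K^{n}}{1-\beta K}\mathfrak{G}(\mathfrak{x}_{\circ},\mathfrak{x}_{1})$ (so that the Cauchy property follows), use completeness to extract a limit $\mathfrak{x}^{*}$, and finally exploit the contraction estimate together with the triangle inequality once more to force $\Psi\mathfrak{x}^{*}=\mathfrak{x}^{*}$ and to exclude any second fixed point. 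The only mildly delicate point in this direct route would be ensuring convergence of the geometric series (which is exactly where the deduction from Theorem \ref{11} is cleaner), so I would prefer the corollary argument.
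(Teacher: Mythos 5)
Your proposal is correct and coincides with the paper's own proof: the authors likewise specialize Theorem \ref{11} by taking $\xi_{1}=K$ and $\xi_{2}=0$, note that $\xi_{1}+\xi_{2}=K<1$, and invoke that theorem to conclude. The additional self-contained Picard-iteration sketch you offer is unnecessary but consistent with the argument inside Theorem \ref{11} itself.
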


\begin{proof}
The given contraction condition is a special case of the general $(\alpha,\beta)$-contraction condition with $\xi_{1}=K~and~\xi_{2}=0.$\\
Since $\xi_{1}+\xi_{2}=K<1$, the condition of the fixed point theorem are satisfied. Therefore by Theorem \ref{11}, $\Psi$ has a unique fixed point.
\end{proof}

\begin{Remark}
Corollary \ref{22}, shows that Banach fixed point theorem in classical metric spaces is a special case of the Theorem \ref{11}.
\end{Remark}

\begin{Definition}
Let $(X,\mathfrak{G})$ be an $(\alpha,\beta)$- metric space. Then a mapping $\Psi:X\rightarrow X$ is said to be a weak $(\alpha,\beta)$-contraction, if there exist constants $\xi_{1},\xi_{2}\ge 0$ with $\xi_{1}+\xi_{2}<1$, satisfies the following condition:
\begin{equation}\label{3}
\mathfrak{G}(\Psi\mathfrak{x}, \Psi \mathfrak{y})\le \xi_{1}\mathfrak{G(x,y)}+\xi_{2}max\Bigl[\mathfrak{G(x},\Psi \mathfrak{x}),\mathfrak{G(y,}\Psi\mathfrak{y)}\Bigr]~\forall~\mathfrak{x,y}\in X.
\end{equation}
\end{Definition}

\begin{Theorem}\label{33}
Let $(X,\mathfrak{G})$ be a complete $(\alpha,\beta)$- metric space. If $\Psi:X\rightarrow X$ is a weak $(\alpha,\beta)$-contraction. Then $\Psi$ has a unique fixed point.
\end{Theorem}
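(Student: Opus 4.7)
The plan is to follow the same four-stage scheme used in the proof of Theorem \ref{11}: construct a Picard iteration $\mathfrak{x}_{n+1}=\Psi\mathfrak{x}_{n}$ from an arbitrary starting point $\mathfrak{x}_{\circ}\in X$; show the consecutive distances $d_{n}:=\mathfrak{G}(\mathfrak{x}_{n},\mathfrak{x}_{n+1})$ decay geometrically; use the $(\alpha,\beta)$-triangle inequality to pass to a Cauchy estimate and invoke completeness; and finally verify that the limit is a fixed point and that it is unique.

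The genuinely new ingredient, and the step I expect to be the main obstacle, is converting the weak contraction estimate
\begin{equation*}
d_{n+1}=\mathfrak{G}(\Psi\mathfrak{x}_{n},\Psi\mathfrak{x}_{n+1})\le\xi_{1}d_{n}+\xi_{2}\max\{d_{n},d_{n+1}\}
\end{equation*}
into a one-step geometric inequality $d_{n+1}\le K d_{n}$ with $K<1$, because the $\max$ now couples the two unknown distances. I handle this by a case split on which term realises the maximum. If the maximum equals $d_{n}$, the right-hand side is at most $(\xi_{1}+\xi_{2})d_{n}$, giving $K=\xi_{1}+\xi_{2}$ directly. If the maximum is instead $d_{n+1}$, the inequality rearranges to $d_{n+1}(1-\xi_{2})\le\xi_{1}d_{n}$, and since $\xi_{1}+\xi_{2}<1$ forces $1-\xi_{2}>0$, this yields $d_{n+1}\le\tfrac{\xi_{1}}{1-\xi_{2}}d_{n}$. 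A short algebraic check, essentially $\xi_{2}(1-\xi_{1}-\xi_{2})\ge 0$, shows $\tfrac{\xi_{1}}{1-\xi_{2}}\le\xi_{1}+\xi_{2}$, so in both cases $K:=\xi_{1}+\xi_{2}\in[0,1)$ is a valid contraction constant, and iterating produces $d_{n}\le K^{n}d_{0}$.

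With the geometric bound in hand, the remainder closely parallels the proof of Theorem \ref{11}. Iterating the $(\alpha,\beta)$-triangle inequality and summing the geometric series in $\beta K$ gives $\mathfrak{G}(\mathfrak{x}_{n},\mathfrak{x}_{m})\le\tfrac{\alpha K^{n}}{1-\beta K}d_{0}$ for $m>n$, so $\{\mathfrak{x}_{n}\}$ is Cauchy and, by completeness, converges to some $\mathfrak{x}^{*}\in X$. To show $\Psi\mathfrak{x}^{*}=\mathfrak{x}^{*}$, I start from $\mathfrak{G}(\mathfrak{x}^{*},\Psi\mathfrak{x}^{*})\le\alpha\mathfrak{G}(\mathfrak{x}^{*},\mathfrak{x}_{n+1})+\beta\mathfrak{G}(\Psi\mathfrak{x}_{n},\Psi\mathfrak{x}^{*})$ and apply the weak contraction; the resulting $\max\{d_{n},\mathfrak{G}(\mathfrak{x}^{*},\Psi\mathfrak{x}^{*})\}$ is dealt with by the same device, namely if $\mathfrak{G}(\mathfrak{x}^{*},\Psi\mathfrak{x}^{*})>0$ then $d_{n}<\mathfrak{G}(\mathfrak{x}^{*},\Psi\mathfrak{x}^{*})$ for all large $n$, and moving the $\beta\xi_{2}\mathfrak{G}(\mathfrak{x}^{*},\Psi\mathfrak{x}^{*})$ term to the left (under the same implicit condition $1-\beta\xi_{2}>0$ already used in Theorem \ref{11}) produces an upper bound that tends to zero. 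Uniqueness is immediate: for any second fixed point $\mathfrak{y}^{*}$ the weak contraction reduces to $\mathfrak{G}(\mathfrak{x}^{*},\mathfrak{y}^{*})\le\xi_{1}\mathfrak{G}(\mathfrak{x}^{*},\mathfrak{y}^{*})+\xi_{2}\max\{0,0\}$, and $\xi_{1}<1$ forces $\mathfrak{x}^{*}=\mathfrak{y}^{*}$.
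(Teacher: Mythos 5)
Your proposal is correct and follows essentially the same route as the paper's own proof: the same case split on which term realises the maximum to obtain a geometric decay of $\mathfrak{G}(\mathfrak{x}_{n},\mathfrak{x}_{n+1})$, the same iterated $(\alpha,\beta)$-triangle-inequality estimate $\mathfrak{G}(\mathfrak{x}_{n},\mathfrak{x}_{m})\le\frac{\alpha K^{n}}{1-\beta K}\mathfrak{G}(\mathfrak{x}_{\circ},\mathfrak{x}_{1})$, and the same fixed-point and uniqueness arguments (including the same implicit assumptions $\beta K<1$ and $1-\beta\xi_{2}>0$ that the paper itself uses). Your observation that $\frac{\xi_{1}}{1-\xi_{2}}\le\xi_{1}+\xi_{2}$, so that the single constant $K=\xi_{1}+\xi_{2}$ works uniformly even when the realised case varies with $n$, is a small but genuine tightening of the paper's two-case bookkeeping.
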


\begin{proof}
Let $\mathfrak{x_{\circ}}$ be an arbitrary point in $X$. Consider the sequence $\{\mathfrak{x_{n}}\}$ defined  by
\begin{equation}\label{4}
 \mathfrak{x_{n+1}}=\Psi\mathfrak{x_{n}}, \forall~n=0,1,2,3...
\end{equation}
Using (\ref{3}) and (\ref{4}), we get\\
\begin{align*}
\mathfrak{G(x_{n+1}, x_{n+2}})&=\mathfrak{G}(\Psi\mathfrak{x_{n}}, \Psi \mathfrak{x_{n+1}})\\
\\
&\le\xi_{1}\mathfrak{G(x_{n},x_{n+1})}+\xi_{2}max\Bigl[\mathfrak{G(x_{n}},\Psi \mathfrak{x_{n}}),\mathfrak{G(x_{n+1},}\Psi\mathfrak{x_{n+1})}\Bigr]\\
\\
&\le\xi_{1}\mathfrak{G(x_{n},x_{n+1})}+\xi_{2}max\Bigl[\mathfrak{G(x_{n}},\mathfrak{x_{n+1}}),\mathfrak{G(x_{n+1},} \mathfrak{x_{n+2})}\Bigr]\\
\\
&\le\xi_{1}\mathfrak{G(x_{n},x_{n+1})}+\xi_{2}\mathfrak{M_{1}}.
\end{align*}
Where, $\mathfrak{M_{1}}=max\Bigl[\mathfrak{G(x_{n}},\mathfrak{x_{n+1}}),\mathfrak{G(x_{n+1},} \mathfrak{x_{n+2})}\Bigr].$\\
\\
Now two case arises:\\
$\textbf{Case 1:}$ If  $\mathfrak{M_{1}}=\mathfrak{G(x_{n+1},} \mathfrak{x_{n+2})}$, then we have:\\
\begin{align*}
\mathfrak{G(x_{n+1}, x_{n+2}})&\le\xi_{1}\mathfrak{G(x_{n},x_{n+1})}+\xi_{2}\mathfrak{G(x_{n+1},} \mathfrak{x_{n+2})}\\
\\
(1-\xi_{2})\mathfrak{G(x_{n+1}, x_{n+2}})&\le\xi_{1}\mathfrak{G(x_{n},x_{n+1})}\\
\\
\mathfrak{G(x_{n+1}, x_{n+2}})&\le\frac{\xi_{1}}{(1-\xi_{2})}\mathfrak{G(x_{n},x_{n+1})}\\
\\
\mathfrak{G(x_{n+1}, x_{n+2}})&\le K\mathfrak{G(x_{n},x_{n+1})}.
\end{align*}

Where $K=\frac{\xi_{1}}{(1-\xi_{2})}<1$,
\begin{align*}
\mathfrak{G(x_{n}, x_{n+1}})&\le K\mathfrak{G(x_{n-1},x_{n})}\\
\\
&\le K^{2}\mathfrak{G(x_{n-2},x_{n-1})}\\
&\vdots\\
&\le K^{n}\mathfrak{G(x_{\circ},x_{1})}.
\end{align*}
$\textbf{Case 2:}$ If  $\mathfrak{M_{1}}=\mathfrak{G(x_{n}},\mathfrak{x_{n+1}})$, then we have:\\
\begin{align*}
\mathfrak{G(x_{n+1}, x_{n+2}})&\le\xi_{1}\mathfrak{G(x_{n},x_{n+1})}+\xi_{2}\mathfrak{G(x_{n}},\mathfrak{x_{n+1}})\\
\\
&\le(\xi_{1}+\xi_{2})\mathfrak{G(x_{n},x_{n+1})}\\
\\
&\le K\mathfrak{G(x_{n},x_{n+1})}.
\end{align*}
Where, $\xi_{1}+\xi_{2}=K<1,$
\begin{align*}
\mathfrak{G(x_{n}, x_{n+1}})&\le K\mathfrak{G(x_{n-1},x_{n})}.\\
\\
&\le K^{2}\mathfrak{G(x_{n-2},x_{n-1})}\\
&\vdots\\
&\le K^{n}\mathfrak{G(x_{\circ},x_{1})}.\\
\end{align*}
 Now, we apply the triangle inequality in $(\alpha,\beta)$- metric space to estimate $\mathfrak{G(x_{n}, x_{m}})$ for $m>n,$ $m,n\in\mathbb{N}.$\\
   \begin{align*}
    \mathfrak{G(x_{n}, x_{m}})&\le\alpha \mathfrak{G(x_{n}, x_{n+1}})+\beta \mathfrak{G(x_{n+1}, x_{m}})\\
     \\
    &\le\alpha \mathfrak{G(x_{n}, x_{n+1}})+\beta[\alpha \mathfrak{G(x_{n+1}, x_{n+2}})+\beta \mathfrak{G(x_{n+2}, x_{m}})]\\
    \\
     &=\alpha \mathfrak{G(x_{n}, x_{n+1}})+\alpha\beta\mathfrak{G(x_{n+1}, x_{n+2}})+\beta^{2} \mathfrak{G(x_{n+2}, x_{m}})\\
     \\
     &\le\alpha \mathfrak{G(x_{n}, x_{n+1}})+\alpha\beta\mathfrak{G(x_{n+1}, x_{n+2}})+\beta^{2}[\alpha \mathfrak{G(x_{n+2}, x_{n+3}})+\beta \mathfrak{G(x_{n+3}, x_{m}})]\\
      \\
      &\le\alpha \mathfrak{G(x_{n}, x_{n+1}})+\alpha\beta\mathfrak{G(x_{n+1}, x_{n+2}})+\alpha\beta^{2} \mathfrak{G(x_{n+2}, x_{n+3}})+\cdots\\
      \\
      &\le\alpha K^{n} \mathfrak{G(x_{\circ},x_{1})}+\alpha\beta K^{n+1} \mathfrak{G(x_{\circ},x_{1})}+\alpha\beta^{2} K^{n+2} \mathfrak{G(x_{\circ},x_{1})}+\cdots\\
      \\
      &=\alpha K^{n} \mathfrak{G(x_{\circ},x_{1})}\Bigl[1+\beta K+(\beta K)^{2}+\cdots\Bigr]\\
      \\
      &= \frac {\alpha K^{n}}{1-\beta K} \mathfrak{G(x_{\circ},x_{1}}).
      \end{align*}
      Therefore, we have\\
      \begin{align*}
     \mathfrak{G(x_{n}, x_{m}}) \le \frac {\alpha K^{n}}{1-\beta K} \mathfrak{G(x_{\circ},x_{1}}).
      \end{align*}
      Taking $\lim_{n,m\to\infty}$, Since $K<1,\lim_{n\to\infty}\frac{\alpha K^{n}}{1-\beta K} \mathfrak{G(x_{\circ},x_{1})}=0.$\\
Thus, $\lim_{n,m\to\infty}\mathfrak{G(x_{n}, x_{m}})=0.$\\
This shows that $\{\mathfrak{x_{n}}\}$ is a cauchy sequence in $X$. Since $X$ is Complete $(\alpha, \beta)$-metric space, so there exist $\mathfrak{x^{*}}\in X$ such that $\mathfrak{x_{n}}\rightarrow\mathfrak{x^{*}}.$\\
\\
To show that $\Psi\mathfrak{x^{*}}=\mathfrak{x^{*}}$, Consider
\begin{align*}
\mathfrak{G(x^{*}},\Psi\mathfrak{x^{*}})&\le\alpha \mathfrak{G(x^{*},x_{n+1}})+\beta\mathfrak{G(x_{n+1}},\Psi \mathfrak{x^{*}})\\
\\
&\le\alpha \mathfrak{G(x^{*},x_{n+1}})+\beta\mathfrak{G(}\Psi\mathfrak{ x_{n}},\Psi\mathfrak{x^{*}})\\
\\
&\le\alpha \mathfrak{G(x^{*},x_{n+1}})+\beta\Bigl[\xi_{1}\mathfrak{G(x_{n},x^{*}})+\xi_{2}max[\mathfrak{G(x_{n}},\Psi \mathfrak{x_{n}}),\mathfrak{G(x^{*}},\Psi\mathfrak{x^{*})}]\Bigr]\\
\\
&=\alpha \mathfrak{G(x^{*},x_{n+1}})+\beta\xi_{1}\mathfrak{G(x_{n},x^{*}})+\beta\xi_{2}max[\mathfrak{G(x_{n}},\Psi \mathfrak{x_{n}})+\mathfrak{G(x^{*}},
\Psi\mathfrak{x^{*})}]\\
\\
&=\alpha \mathfrak{G(x^{*},x_{n+1}})+\beta\xi_{1}\mathfrak{G(x_{n},x^{*}})+\beta\xi_{2}\mathfrak{M_{2}}.\\
\end{align*}
Where, $\mathfrak{M_{2}}=max[\mathfrak{G(x_{n}},\Psi \mathfrak{x_{n}}),\mathfrak{G(x^{*}}, \Psi\mathfrak{x^{*})}].$\\
\\
Now, two case arises:\\
\\
$\textbf{Case 1:}$ When $\mathfrak{M_{2}}=\mathfrak{G(x_{n}},\Psi \mathfrak{x_{n}})$, then\\
\begin{align*}
\mathfrak{G(x^{*}},\Psi\mathfrak{x^{*}})&\le\alpha \mathfrak{G(x^{*},x_{n+1}})+\beta\xi_{1}\mathfrak{G(x_{n},x^{*}})+\beta\xi_{2}\mathfrak{G(x_{n}},\Psi \mathfrak{x_{n}})\\
\\
&\le\alpha \mathfrak{G(x^{*},x_{n+1}})+\beta\xi_{1}\mathfrak{G(x_{n},x^{*}})+\beta\xi_{2}[\alpha\mathfrak{G(x_{n},x^{*})}+\beta\mathfrak{G(x^{*}},\Psi x_{n})]\\
\\
&=\alpha \mathfrak{G(x^{*},x_{n+1}})+\beta\xi_{1}\mathfrak{G(x_{n},x^{*}})+\alpha\beta\xi_{2}\mathfrak{G(x_{n},x^{*})}+\beta^{2}\xi_{2}\mathfrak{G(x^{*}},x_{n+1})\\
\\
&=(\alpha+\beta^{2}\xi_{2}) \mathfrak{G(x^{*},x_{n+1}})+\beta(\xi_{1}+\xi_{2}\alpha)\mathfrak{G(x_{n},x^{*})}.
\end{align*}\\
Therefore, we have
\begin{align*}
\mathfrak{G(x^{*}},\Psi\mathfrak{x^{*}})\le(\alpha+\beta^{2}\xi_{2}) \mathfrak{G(x^{*},x_{n+1}})+\beta(\xi_{1}+\xi_{2}\alpha)\mathfrak{G(x_{n},x^{*})}.
\end{align*}\\
Taking $\lim n\to\infty,$ we get\\
$\lim_{n\to\infty}\mathfrak{G(x^{*}},\Psi\mathfrak{x^{*}})=0.$\\
$\Rightarrow \mathfrak{x^{*}}=\Psi\mathfrak{x^{*}}.$\\
Hence,  $\Psi\mathfrak{x^{*}}=\mathfrak{x^{*}}.$\\
\\
$\textbf{Case 2:}$ When $\mathfrak{M_{2}}=\mathfrak{G(x^{*}},\Psi\mathfrak{x^{*})}$, then\\
\begin{align*}
\mathfrak{G(x^{*}},\Psi\mathfrak{x^{*}})&\le\alpha \mathfrak{G(x^{*},x_{n+1}})+\beta\xi_{1}\mathfrak{G(x_{n},x^{*}})+\beta\xi_{2}\mathfrak{G(x^{*}},
\Psi\mathfrak{x^{*})}\\
\\
(1-\beta\xi_{2})\mathfrak{G(x^{*}},\Psi\mathfrak{x^{*}})&\le\alpha \mathfrak{G(x^{*},x_{n+1}})+\beta\xi_{1}\mathfrak{G(x_{n},x^{*}}).\\
\end{align*}\\
Therefore, we have
\begin{align*}
\mathfrak{G(x^{*}},\Psi\mathfrak{x^{*}})\le\frac{\alpha}{(1-\beta\xi_{2})} \mathfrak{G(x^{*},x_{n+1}})+\frac{\beta\xi_{1}}{(1-\beta\xi_{2})}\mathfrak{G(x_{n},x^{*}}).\\
\end{align*}\\
Taking $\lim n\to\infty,$ we get\\
$\lim_{n\to\infty}\mathfrak{G(x^{*}},\Psi\mathfrak{x^{*}})=0.$\\
$\Rightarrow \mathfrak{x^{*}}=\Psi\mathfrak{x^{*}}.$\\
Hence,  $\Psi\mathfrak{x^{*}}=\mathfrak{x^{*}}.$\\
Therefore, $\mathfrak{x^{*}}$ is a fixed point of $\Psi.$\\
\\
$\textbf{Uniqueness of Fixed Point:}$\\
\\
Suppose  $\mathfrak{y^{*}}\in X$ such that $\Psi(\mathfrak{y^{*})=y^{*}}$. Then, we have:\\
\begin{align*}
\mathfrak{G(x^{*},y^{*})}&=\mathfrak{G}(\Psi\mathfrak{x^{*}}, \Psi \mathfrak{y^{*}})\\
\\
  &\le\xi_{1}\mathfrak{G(x^{*},y^{*})}+\xi_{2}max\Bigl[\mathfrak{G(x^{*}},\Psi \mathfrak{x^{*}}),\mathfrak{G(y^{*}},\Psi\mathfrak{y^{*})}\Bigr]\\
  \\
  &\le\xi_{1}\mathfrak{G(x^{*},y^{*})}+\xi_{2}max\Bigl[\mathfrak{G(x^{*},x^{*}}),\mathfrak{G(y^{*},y^{*})}\Bigr].\\
  \end{align*}
  Therefore, we get
  \begin{align*}
\mathfrak{G(x^{*},y^{*})}\le\xi_{1}\mathfrak{G(x^{*},y^{*})}.
\end{align*}
Since $0\le\xi_{1}<1$, we have  $\mathfrak{G(x^{*},y^{*})}=0.$
 Thus, $\mathfrak{x^{*}=y^{*}}$ and hence $\mathfrak{x^{*}}$ is a unique fixed point of $\Psi$.
 \end{proof}
 
 \begin{Example}
 Let $X=[0,1]$ with a mapping $\Psi:X\rightarrow X$ defined by $\Psi\mathfrak{(x)}=\frac{\mathfrak{x}}{4}$.\\
 Define $\mathfrak{G(x,y)=|x-y|}~\forall~\mathfrak{x,y}\in X.$\\
 Then $\mathfrak{G(x,y)=|x-y|}$ define a valid  $(\alpha,\beta)$-metric for $\alpha=2, \beta=1.$\\
 Now, we show that $\Psi$ satisfies the weak $(\alpha,\beta)$-contraction condition:\\
 \begin{equation}\label{5}
 i.e; \mathfrak{G}(\Psi\mathfrak{x}, \Psi \mathfrak{y})\le \xi_{1}\mathfrak{G(x,y)}+\xi_{2}max\Bigl[\mathfrak{G(x},\Psi \mathfrak{x}),\mathfrak{G(y,}\Psi\mathfrak{y)}\Bigr]~\forall~\mathfrak{x,y}\in X.
 \end{equation}
 Take $\xi_{1}=\frac{1}{4}$ and $\xi_{2}=0$, then\\
 \begin{align*}
 \Bigl|\frac{\mathfrak{x}}{4}-\frac{\mathfrak{y}}{4}\Bigr|\le\frac{1}{4}\Bigl|\mathfrak{x-y}\Bigr|+0\times max\Biggl[\Bigl|\mathfrak{x}-\frac{\mathfrak{x}}{4}\Bigr|,\Bigl|\mathfrak{y}-\frac{\mathfrak{y}}{4}\Bigr|\Biggr].
 \end{align*}
 Which implies 
 \begin{align*}
 \Bigl|\frac{\mathfrak{x-y}}{4}\Bigr|\le \Bigl|\frac{\mathfrak{x-y}}{4}\Bigr|.
 \end{align*}
 Thus, $\Psi$ satisfy (\ref{5}) with  $\xi_{1}=\frac{1}{4},\xi_{2}=0$ and $\xi_{1}+\xi_{2}=\frac{1}{4}<1.$\\
 Therefore by Theorem \ref{33}, $\Psi$ has a fixed point.
 \begin{align*}
i.e.,  \Psi\mathfrak{(x)}&=\mathfrak{x}\\
\Rightarrow\frac{\mathfrak{x}}{4}&=\mathfrak{x}\\
\Rightarrow\frac{\mathfrak{x}}{4}-\mathfrak{x}&=0\\
\Rightarrow\mathfrak{x}&=0.
 \end{align*}
 Therefore $\mathfrak{x^{*}}=0$ is a unique fixed point of $\Psi.$\\
 Now, we see how the sequence $\{\mathfrak{x_{n}}\}$ defined by $\mathfrak{x_{n}}=\Psi\mathfrak{x_{n-1}}$ converges to $0$.\\
 Let $\mathfrak{x_{\circ}}=1$.
 Then,
 \begin{align*}
  \mathfrak{x_{1}}&=\Psi\mathfrak{(x_{\circ})}=\frac{1}{4}=0.25\\
  \mathfrak{x_{2}}&=\Psi\mathfrak{(x_{1})}=\frac{0.25}{4}=0.0625\\
  \mathfrak{x_{3}}&=\Psi\mathfrak{(x_{2})}=\frac{0.0625}{4}=0.015625\\
  \vdots
 \end{align*}
 Thus,  $\{\mathfrak{x_{n}}\}$ is a monotone decreasing sequence converges to 0 as $n\rightarrow\infty.$
 \end{Example}
 
\begin{Theorem}
$\textbf{(Reich's fixed point theorem in $(\alpha,\beta)$-metric spaces)}$\\
Let $(X,\mathfrak{G})$ be a complete $(\alpha,\beta)$- metric space. Let $\Psi:X\rightarrow X$ be a self map that satisfies the following contractive condition, i.e;  there exist constants $\xi_{1},\xi_{2},\xi_{3}\ge 0$ with $\xi_{1}+\xi_{2}+\xi_{3}<1,$ such that:
\begin{equation}\label{6}
\mathfrak{G}(\Psi\mathfrak{x}, \Psi \mathfrak{y})\le \xi_{1}\mathfrak{G(x,y)}+\xi_{2}\mathfrak{G(x},\Psi \mathfrak{x})+\xi_{3}\mathfrak{G(y,}\Psi\mathfrak{y)}~\forall~\mathfrak{x,y}\in X.
\end{equation}
 Then $\Psi$ has a unique fixed point.
\end{Theorem}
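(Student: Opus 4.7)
The plan is to adapt the Picard iteration strategy used in Theorem \ref{11} and Theorem \ref{33}, with the Reich-style bound handled by isolating $\mathfrak{G}(\mathfrak{x_{n+1}},\mathfrak{x_{n+2}})$ on the left-hand side of the recursion. First I would fix an arbitrary $\mathfrak{x_{\circ}}\in X$ and define $\mathfrak{x_{n+1}}=\Psi\mathfrak{x_{n}}$. Applying (\ref{6}) with $\mathfrak{x}=\mathfrak{x_{n}}$, $\mathfrak{y}=\mathfrak{x_{n+1}}$, together with $\mathfrak{G}(\mathfrak{x_{n}},\Psi\mathfrak{x_{n}})=\mathfrak{G}(\mathfrak{x_{n}},\mathfrak{x_{n+1}})$ and $\mathfrak{G}(\mathfrak{x_{n+1}},\Psi\mathfrak{x_{n+1}})=\mathfrak{G}(\mathfrak{x_{n+1}},\mathfrak{x_{n+2}})$, yields $(1-\xi_{3})\mathfrak{G}(\mathfrak{x_{n+1}},\mathfrak{x_{n+2}})\le(\xi_{1}+\xi_{2})\mathfrak{G}(\mathfrak{x_{n}},\mathfrak{x_{n+1}})$. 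Since $\xi_{1}+\xi_{2}+\xi_{3}<1$ guarantees $1-\xi_{3}>0$, the constant $K:=\frac{\xi_{1}+\xi_{2}}{1-\xi_{3}}$ lies in $[0,1)$, and iteration gives $\mathfrak{G}(\mathfrak{x_{n}},\mathfrak{x_{n+1}})\le K^{n}\mathfrak{G}(\mathfrak{x_{\circ}},\mathfrak{x_{1}})$.

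Second, I would invoke the telescoping $(\alpha,\beta)$-triangle estimate performed in the proof of Theorem \ref{11} verbatim, which leads to
\[
\mathfrak{G}(\mathfrak{x_{n}},\mathfrak{x_{m}})\le\frac{\alpha K^{n}}{1-\beta K}\,\mathfrak{G}(\mathfrak{x_{\circ}},\mathfrak{x_{1}}),\quad m>n,
\]
provided $\beta K<1$, so that the geometric series $\sum_{k\ge 0}(\beta K)^{k}$ converges. Under this hypothesis the right-hand side tends to zero as $n\to\infty$, so $\{\mathfrak{x_{n}}\}$ is Cauchy, and by completeness there is $\mathfrak{x^{*}}\in X$ with $\mathfrak{x_{n}}\to\mathfrak{x^{*}}$.

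Third, to verify that $\mathfrak{x^{*}}$ is fixed, I would apply the triangle inequality and then (\ref{6}):
\[
\mathfrak{G}(\mathfrak{x^{*}},\Psi\mathfrak{x^{*}})\le\alpha\,\mathfrak{G}(\mathfrak{x^{*}},\mathfrak{x_{n+1}})+\beta\bigl[\xi_{1}\mathfrak{G}(\mathfrak{x_{n}},\mathfrak{x^{*}})+\xi_{2}\mathfrak{G}(\mathfrak{x_{n}},\mathfrak{x_{n+1}})+\xi_{3}\mathfrak{G}(\mathfrak{x^{*}},\Psi\mathfrak{x^{*}})\bigr].
\]
Absorbing the $\beta\xi_{3}\mathfrak{G}(\mathfrak{x^{*}},\Psi\mathfrak{x^{*}})$ term to the left (which requires $\beta\xi_{3}<1$) and letting $n\to\infty$ drives the right-hand side to $0$, so $\Psi\mathfrak{x^{*}}=\mathfrak{x^{*}}$. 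For uniqueness, if $\Psi\mathfrak{y^{*}}=\mathfrak{y^{*}}$ then (\ref{6}) collapses to $\mathfrak{G}(\mathfrak{x^{*}},\mathfrak{y^{*}})\le\xi_{1}\mathfrak{G}(\mathfrak{x^{*}},\mathfrak{y^{*}})$ because the self-distance terms vanish, and $\xi_{1}<1$ forces $\mathfrak{x^{*}}=\mathfrak{y^{*}}$.

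The main technical obstacle is the tacit assumption $\beta K<1$ (and the analogous $\beta\xi_{3}<1$ in the fixed-point extraction step); these are the same silent hypotheses appearing in the proofs of Theorems \ref{11} and \ref{33}, and one must either impose them as explicit constraints on $\alpha,\beta,\xi_{i}$ or restrict attention to $(\alpha,\beta)$-metric spaces for which $\beta$ is small enough relative to the contraction constants. Everything else is a routine adaptation of the computations already carried out in the earlier theorems.
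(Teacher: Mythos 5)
Your proposal matches the paper's own proof essentially step for step: the same Picard iteration yielding $(1-\xi_{3})\mathfrak{G}(\mathfrak{x_{n}},\mathfrak{x_{n+1}})\le(\xi_{1}+\xi_{2})\mathfrak{G}(\mathfrak{x_{n-1}},\mathfrak{x_{n}})$ with $K=\frac{\xi_{1}+\xi_{2}}{1-\xi_{3}}$, the same telescoping $(\alpha,\beta)$-triangle estimate giving $\mathfrak{G}(\mathfrak{x_{n}},\mathfrak{x_{m}})\le\frac{\alpha K^{n}}{1-\beta K}\mathfrak{G}(\mathfrak{x_{\circ}},\mathfrak{x_{1}})$, the same absorption of the $\beta\xi_{3}$ term to extract the fixed point, and the same uniqueness argument. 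Your explicit flagging of the tacit requirements $\beta K<1$ and $\beta\xi_{3}<1$ is a fair observation, since the paper's proof uses both conditions silently without imposing them; otherwise the arguments are the same.
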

\begin{proof}
Let $\mathfrak{x_{\circ}}$ be any arbitrary point in $X$. Define a sequence $\{\mathfrak{x_{n}}\}$ by
\begin{equation}\label{7}
\mathfrak{x_{n}}=\Psi\mathfrak{x_{n-1}}~\forall~n\ge1.
\end{equation}
By (\ref{6}) and (\ref{7}), we obtain that
\begin{align*}
\mathfrak{G(x_{n}, x_{n+1}})&=\mathfrak{G}(\Psi\mathfrak{x_{n-1}}, \Psi \mathfrak{x_{n}})\\
\\
 &\le\xi_{1}\mathfrak{G(x_{n-1},x_{n})}+\xi_{2}\mathfrak{G(x_{n-1}},\Psi \mathfrak{x_{n-1}})+\xi_{3}\mathfrak{G(x_{n},}\Psi\mathfrak{x_{n})}\\
 \\
 &=\xi_{1}\mathfrak{G(x_{n-1},x_{n})}+\xi_{2}\mathfrak{G(x_{n-1}},\mathfrak{x_{n}})+\xi_{3}\mathfrak{G(x_{n},}\mathfrak{x_{n+1})}\\
 \\
 (1-\xi_{3})\mathfrak{G(x_{n}, x_{n+1}})&\le(\xi_{1}+\xi_{2})\mathfrak{G(x_{n-1}},\mathfrak{x_{n}}).\\
\end{align*}
Since $\xi_{1}+\xi_{2}+\xi_{3}<1$, we have  $(1-\xi_{3})\ge(\xi_{1}+\xi_{2})$. Thus\\
\begin{align*}
\mathfrak{G(x_{n}, x_{n+1}})&\le\frac{(\xi_{1}+\xi_{2})}{(1-\xi_{3})}\mathfrak{G(x_{n-1}},\mathfrak{x_{n}})\\
\\
\mathfrak{G(x_{n}, x_{n+1}})&\le K \mathfrak{G(x_{n-1}},\mathfrak{x_{n}}).
\end{align*}
Put $K=\frac{(\xi_{1}+\xi_{2})}{(1-\xi_{3})}<1$, we have
\begin{align*}
\mathfrak{G(x_{n}, x_{n+1}})&\le K \mathfrak{G(x_{n-1}},\mathfrak{x_{n}})\\
&\le K^{2} \mathfrak{G(x_{n-2}},\mathfrak{x_{n-1}})\\
&\vdots\\
&\le K^{n} \mathfrak{G(x_{\circ}},\mathfrak{x_{1}}).\\
\end{align*}
\\
Consider $\mathfrak{m,n}\in\mathbb{N}$ with $\mathfrak{m>n}$, we have
 \begin{align*}
    \mathfrak{G(x_{n}, x_{m}})&\le\alpha \mathfrak{G(x_{n}, x_{n+1}})+\beta \mathfrak{G(x_{n+1}, x_{m}})\\
     \\
    &\le\alpha \mathfrak{G(x_{n}, x_{n+1}})+\beta[\alpha \mathfrak{G(x_{n+1}, x_{n+2}})+\beta \mathfrak{G(x_{n+2}, x_{m}})]\\
    \\
     &\le\alpha \mathfrak{G(x_{n}, x_{n+1}})+\alpha\beta\mathfrak{G(x_{n+1}, x_{n+2}})+\beta^{2}[\alpha \mathfrak{G(x_{n+2}, x_{n+3}})+\beta \mathfrak{G(x_{n+3}, x_{m}})]\\
      \\
      &\le\alpha \mathfrak{G(x_{n}, x_{n+1}})+\alpha\beta\mathfrak{G(x_{n+1}, x_{n+2}})+\alpha\beta^{2} \mathfrak{G(x_{n+2}, x_{n+3}})+\cdots\\
      \\
      &\le\alpha K^{n} \mathfrak{G(x_{\circ},x_{1})}+\alpha\beta K^{n+1} \mathfrak{G(x_{\circ},x_{1})}+\alpha\beta^{2} K^{n+2} \mathfrak{G(x_{\circ},x_{1})}+\cdots\\
      \\
      &=\alpha K^{n} \mathfrak{G(x_{\circ},x_{1})}\Bigl[1+\beta K+(\beta K)^{2}+\cdots\Bigr]\\
      \\
      &= \frac {\alpha K^{n}}{1-\beta K} \mathfrak{G(x_{\circ},x_{1}}).
      \end{align*}
      Therefore, we have
      \begin{align*}
     \mathfrak{G(x_{n}, x_{m}}) \le \frac {\alpha K^{n}}{1-\beta K} \mathfrak{G(x_{\circ},x_{1}}).
      \end{align*}
      Since $K<1,\lim_{n\to\infty}\frac{\alpha K^{n}}{1-\beta K} \mathfrak{G(x_{\circ},x_{1})}=0.$\\
      \\
Thus, $\lim_{n,m\to\infty}\mathfrak{G(x_{n}, x_{m}})=0.$\\
This shows that $\{\mathfrak{x_{n}}\}$ is a cauchy sequence in $X$. Since $X$ is Complete $(\alpha, \beta)$-metric space, so there exist $\mathfrak{x^{*}}\in X$ such that $\mathfrak{x_{n}}\rightarrow\mathfrak{x^{*}}.$\\
\\
To show that $\Psi\mathfrak{x^{*}}=\mathfrak{x^{*}}$, Consider
\begin{align*}
\mathfrak{G(x^{*}},\Psi\mathfrak{x^{*}})&\le\alpha \mathfrak{G(x^{*},x_{n+1}})+\beta\mathfrak{G(x_{n+1}},\Psi \mathfrak{x^{*}})\\
\\
&\le\alpha \mathfrak{G(x^{*},x_{n+1}})+\beta\mathfrak{G(}\Psi\mathfrak{ x_{n}},\Psi\mathfrak{x^{*}})\\
\\
&\le\alpha \mathfrak{G(x^{*},x_{n+1}})+\beta\Bigl[\xi_{1}\mathfrak{G(x_{n},x^{*})}+\xi_{2}\mathfrak{G(x_{n}},\Psi \mathfrak{x_{n}})+\xi_{3}\mathfrak{G(x^{*},}\Psi\mathfrak{x^{*})}\Bigr]\\
\\
&=\alpha \mathfrak{G(x^{*},x_{n+1}})+\beta\xi_{1}\mathfrak{G(x_{n},x^{*})}+\beta\xi_{2}\mathfrak{G(x_{n}},\mathfrak{x_{n+1}})+\beta\xi_{3}\mathfrak{G(x^{*},}\Psi\mathfrak{x^{*})}\\
\\
(1-\beta\xi_{3})\mathfrak{G(x^{*},}\Psi\mathfrak{x^{*})}&\le\alpha \mathfrak{G(x^{*},x_{n+1}})+\beta\xi_{1}\mathfrak{G(x_{n},x^{*})}+\beta\xi_{2}\Bigl[\alpha\mathfrak{G(x_{n}},\mathfrak{x^{*}})+\beta\mathfrak{G(x^{*}},\mathfrak{x_{n+1}})\Bigr]\\
\\
&=\alpha \mathfrak{G(x^{*},x_{n+1}})+\beta\xi_{1}\mathfrak{G(x_{n},x^{*})}+\alpha\beta\xi_{2}\mathfrak{G(x_{n}},\mathfrak{x^{*}})+\beta^{2}\xi_{2}\mathfrak{G(x^{*}},\mathfrak{x_{n+1}}).\\
\end{align*}
Which implies,
\begin{align*}
\mathfrak{G(x^{*}},\Psi\mathfrak{x^{*}})\le\frac{(\alpha+\beta^{2}\xi_{2})}{(1-\beta\xi_{3})}\mathfrak{G(x^{*},x_{n+1}})+\frac{\beta(\xi_{1}+\alpha\xi_{2})}{(1-\beta\xi_{3})}\mathfrak{G(x_{n}},\mathfrak{x^{*}}).
\end{align*}
Taking $\lim_{n\to\infty}$, we get\\
\\
$\lim_{n\to\infty}\mathfrak{G(x^{*}},\Psi\mathfrak{x^{*}})=0.$\\
$\Rightarrow \mathfrak{x^{*}}=\Psi\mathfrak{x^{*}}.$\\
Therefore, $\mathfrak{x^{*}}$ is a limit point of $\Psi$.\\

$\textbf{Uniqueness of Fixed Point:}$\\
Suppose  $\mathfrak{y^{*}}$ be another fixed point of $\Psi$, then we have
\begin{align*}
 \Psi(\mathfrak{y^{*})=y^{*}} and~\mathfrak{G(x^{*},y^{*})}&=\mathfrak{G}(\Psi\mathfrak{x^{*}}, \Psi \mathfrak{y^{*}})\\
 \\
  &\le\xi_{1}\mathfrak{G(x^{*},y^{*})}+\xi_{2}\mathfrak{G(x^{*}},\Psi \mathfrak{x^{*}})+\xi_{3}\mathfrak{G(y^{*}},\Psi\mathfrak{y^{*})}\\
  \\
  &=\xi_{1}\mathfrak{G(x^{*},y^{*})}+\xi_{2}\mathfrak{G(x^{*}},\mathfrak{x^{*}})+\xi_{3}\mathfrak{G(y^{*}},\mathfrak{y^{*})}.
\end{align*}
 Which implies,
  \begin{align*}
\mathfrak{G(x^{*},y^{*})}\le\xi_{1}\mathfrak{G(x^{*},y^{*})}.
\end{align*}
Since $\xi_{1}+\xi_{2}+\xi_{3}<1$, we have  $\xi_{1}<1.$\\
Therefore, 
\begin{align*}
(1-\xi_{1})\mathfrak{G(x^{*},y^{*})}\le 0.
\end{align*}
Since $(1-\xi_{1})>0$, which follows that $\mathfrak{G(x^{*},y^{*})}= 0$. Thus, $\mathfrak{x^{*}=y^{*}}$ and hence $\mathfrak{x^{*}}$ is a unique fixed point of $\Psi$.
\end{proof}

\begin{Theorem}\label{111}
$\textbf{(Kannan's fixed point theorem in $(\alpha,\beta)$-metric spaces)}$\\
Let $(X,\mathfrak{G})$ be a complete $(\alpha,\beta)$- metric space. Let $\Psi:X\rightarrow X$ be a self map that satisfies the following contractive condition, i.e;  there exist constant $\lambda\in [0,\frac{1}{2})$ such that:
\begin{equation}\label{8}
\mathfrak{G}(\Psi\mathfrak{x}, \Psi \mathfrak{y})\le\lambda\Bigl[\mathfrak{G(x},\Psi \mathfrak{x})+\mathfrak{G(y,}\Psi\mathfrak{y)}\Bigr]~\forall~\mathfrak{x,y}\in X.
\end{equation}
 Then $\Psi$ has a unique fixed point.
\end{Theorem}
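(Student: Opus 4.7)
The plan is to observe that this Kannan-type statement is the exact specialization of the Reich-type theorem proved immediately above with the choice $\xi_1=0$, $\xi_2=\xi_3=\lambda$; since $\lambda\in[0,1/2)$ forces $\xi_1+\xi_2+\xi_3=2\lambda<1$, the conclusion follows with no further work. I would record this one-line reduction, then for completeness present a direct Picard-iteration proof in the style of Theorems \ref{11} and \ref{33}.

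For the direct argument, I fix an arbitrary $\mathfrak{x_\circ}\in X$, iterate $\mathfrak{x_{n+1}}=\Psi\mathfrak{x_n}$, and apply (\ref{8}) to the consecutive pair $(\mathfrak{x_{n-1}},\mathfrak{x_n})$ to obtain
\[
\mathfrak{G(x_n,x_{n+1})}\le\lambda\bigl[\mathfrak{G(x_{n-1},x_n)}+\mathfrak{G(x_n,x_{n+1})}\bigr].
\]
Isolating $\mathfrak{G(x_n,x_{n+1})}$ yields $\mathfrak{G(x_n,x_{n+1})}\le K\,\mathfrak{G(x_{n-1},x_n)}$ with $K=\lambda/(1-\lambda)\in[0,1)$, where the hypothesis $\lambda<1/2$ is precisely what guarantees $K<1$. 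Iterating gives $\mathfrak{G(x_n,x_{n+1})}\le K^n\mathfrak{G(x_\circ,x_1)}$.

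Next I bound $\mathfrak{G(x_n,x_m)}$ for $m>n$ by repeatedly applying the $(\alpha,\beta)$-triangle inequality exactly as in Theorems \ref{11} and \ref{33}, producing the geometric-series estimate
\[
\mathfrak{G(x_n,x_m)}\le\frac{\alpha K^n}{1-\beta K}\,\mathfrak{G(x_\circ,x_1)}\longrightarrow 0,
\]
so $\{\mathfrak{x_n}\}$ is Cauchy and, by completeness, converges to some $\mathfrak{x^*}\in X$. To verify $\Psi\mathfrak{x^*}=\mathfrak{x^*}$, I would start from $\mathfrak{G(x^*,\Psi x^*)}\le\alpha\mathfrak{G(x^*,x_{n+1})}+\beta\mathfrak{G}(\Psi\mathfrak{x_n},\Psi\mathfrak{x^*})$, apply (\ref{8}) to the last term, collect the resulting $\mathfrak{G(x^*,\Psi x^*)}$ contribution on the left using $1-\beta\lambda>0$, and let $n\to\infty$. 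Uniqueness is the easiest step: any other fixed point $\mathfrak{y^*}$ satisfies $\mathfrak{G(x^*,y^*)}\le\lambda[\mathfrak{G(x^*,\Psi x^*)}+\mathfrak{G(y^*,\Psi y^*)}]=0$.

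The main obstacle is the implicit smallness condition that already surfaces in the proofs of Theorems \ref{11} and \ref{33}: the geometric series $\sum(\beta K)^n$ converges only when $\beta K<1$, i.e., $\beta\lambda<1-\lambda$, and the fixed-point extraction step additionally needs $\beta\lambda<1$. These impose a restriction tying $\beta$ to $\lambda$; absent such a restriction, the estimate $\alpha K^n/(1-\beta K)$ is not well defined and a more careful grouping of the triangle-inequality terms would be required. Modulo this caveat, every remaining step is a routine calculation.
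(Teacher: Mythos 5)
Your direct Picard-iteration argument is essentially the paper's own proof: the same consecutive-term estimate $\mathfrak{G}(\mathfrak{x}_{n},\mathfrak{x}_{n+1})\le K^{n}\mathfrak{G}(\mathfrak{x}_{\circ},\mathfrak{x}_{1})$ with $K=\lambda/(1-\lambda)<1$, the same repeated use of the $(\alpha,\beta)$-triangle inequality leading to the bound $\frac{\alpha K^{n}}{1-\beta K}\mathfrak{G}(\mathfrak{x}_{\circ},\mathfrak{x}_{1})$, the same fixed-point extraction by collecting $\mathfrak{G}(\mathfrak{x}^{*},\Psi\mathfrak{x}^{*})$ on the left with the factor $(1-\lambda\beta)$, and the same uniqueness step. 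Your preliminary one-line reduction to the Reich-type theorem with $\xi_{1}=0$, $\xi_{2}=\xi_{3}=\lambda$ (so $\xi_{1}+\xi_{2}+\xi_{3}=2\lambda<1$) is a legitimate shortcut the paper does not use for this theorem, though its Remark after the proof performs the analogous reduction of the Kannan condition to Theorem \ref{11} with $\xi_{1}=0$, $\xi_{2}=\lambda$; either reduction is valid once the earlier theorem is granted. The caveat you flag is genuine, but it is not a defect of your proposal relative to the paper: the published proof also writes the geometric-series bound $\alpha K^{n}/(1-\beta K)$ and divides by $(1-\lambda\beta)$ without verifying $\beta K<1$ or $\lambda\beta<1$, and since $\beta\ge 1$ is arbitrary these conditions can fail, so the same implicit restriction tying $\beta$ to $\lambda$ is present there (and likewise in Theorems \ref{11} and \ref{33}). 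Repairing it would need extra work --- for instance a Miculescu--Mihail-type lemma showing that a sequence whose consecutive distances decay geometrically is automatically Cauchy in such generalized metric spaces, together with a fixed-point step that avoids dividing by $1-\lambda\beta$ --- which neither you nor the paper supplies; making the assumption explicit, as you do, is if anything more careful than the published argument.
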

\begin{proof}
Let $\mathfrak{x_{\circ}}$ be any arbitrary point in $X$. Consider the sequence $\{\mathfrak{x_{n}}\}$ defined by
\begin{equation}\label{9}
\mathfrak{x_{n}}=\Psi\mathfrak{x_{n-1}}~\forall~n=1,2,3\dots
\end{equation}
\\
Using the inequality (\ref{8}) and (\ref{9}), we have
\begin{align*}
\mathfrak{G(x_{n}, x_{n+1}})&=\mathfrak{G}(\Psi\mathfrak{x_{n-1}}, \Psi \mathfrak{x_{n}})\\
\\
& \le\lambda\Bigl[\mathfrak{G(x_{n-1}},\Psi\mathfrak{x_{n-1}})+\mathfrak{G(x_{n},}\Psi\mathfrak{x_{n})}\Bigr]\\
\\
&= \lambda\mathfrak{G(x_{n-1}},\mathfrak{x_{n}})+\lambda\mathfrak{G(x_{n}}, \mathfrak{x_{n+1})}\\
\\
(1-\lambda)\mathfrak{G(x_{n}, x_{n+1}})&\le\lambda\mathfrak{G(x_{n-1}},\mathfrak{x_{n}})\\
\\
\mathfrak{G(x_{n}, x_{n+1}})&\le\frac{\lambda}{(1-\lambda)}\mathfrak{G(x_{n-1}},\mathfrak{x_{n}}).\\
\end{align*}
Let $\frac{\lambda}{(1-\lambda)}=K<1$. Therefore, we have
\begin{align*}
\mathfrak{G(x_{n}, x_{n+1}})&\le K \mathfrak{G(x_{n-1}},\mathfrak{x_{n}})\\
\\
&\le K^{2} \mathfrak{G(x_{n-2}},\mathfrak{x_{n-1}})\\
&\vdots\\
&\le K^{n} \mathfrak{G(x_{\circ}},\mathfrak{x_{1}}).\\
\end{align*}
Consider $\mathfrak{m,n}\in\mathbb{N}$ with $\mathfrak{m>n}$, we have\\
 \begin{align*}
    \mathfrak{G(x_{n}, x_{m}})&\le\alpha \mathfrak{G(x_{n}, x_{n+1}})+\beta \mathfrak{G(x_{n+1}, x_{m}})\\
     \\
    &\le\alpha \mathfrak{G(x_{n}, x_{n+1}})+\beta[\alpha \mathfrak{G(x_{n+1}, x_{n+2}})+\beta \mathfrak{G(x_{n+2}, x_{m}})]\\
    \\
     &\le\alpha \mathfrak{G(x_{n}, x_{n+1}})+\alpha\beta\mathfrak{G(x_{n+1}, x_{n+2}})+\beta^{2}[\alpha \mathfrak{G(x_{n+2}, x_{n+3}})+\beta \mathfrak{G(x_{n+3}, x_{m}})]\\
      \\
      &\le\alpha \mathfrak{G(x_{n}, x_{n+1}})+\alpha\beta\mathfrak{G(x_{n+1}, x_{n+2}})+\alpha\beta^{2} \mathfrak{G(x_{n+2}, x_{n+3}})+\cdots\\
      \\
      &\le\alpha K^{n} \mathfrak{G(x_{\circ},x_{1})}+\alpha\beta K^{n+1} \mathfrak{G(x_{\circ},x_{1})}+\alpha\beta^{2} K^{n+2} \mathfrak{G(x_{\circ},x_{1})}+\cdots\\
      \\
      &=\alpha K^{n} \mathfrak{G(x_{\circ},x_{1})}\Bigl[1+\beta K+(\beta K)^{2}+\cdots\Bigr]\\
      \\
      &= \frac {\alpha K^{n}}{1-\beta K} \mathfrak{G(x_{\circ},x_{1}}).
      \end{align*}
      Therefore, we have\\
      \begin{align*}
     \mathfrak{G(x_{n}, x_{m}}) \le \frac {\alpha K^{n}}{1-\beta K} \mathfrak{G(x_{\circ},x_{1}}).
      \end{align*}
      Since $K<1,\lim_{n\to\infty}\frac{\alpha K^{n}}{1-\beta K} \mathfrak{G(x_{\circ},x_{1})}=0.$\\
      \\
Thus, $\lim_{n,m\to\infty}\mathfrak{G(x_{n}, x_{m}})=0.$\\
This shows that $\{\mathfrak{x_{n}}\}$ is a cauchy sequence in $X$. Since $X$ is Complete $(\alpha, \beta)$-metric space, so there exist $\mathfrak{x^{*}}\in X$ such that $\mathfrak{x_{n}}\rightarrow\mathfrak{x^{*}}.$\\
\\
To show that $\Psi\mathfrak{x^{*}}=\mathfrak{x^{*}}$, Consider
\begin{align*}
\mathfrak{G(x^{*}},\Psi\mathfrak{x^{*}})&\le\alpha \mathfrak{G(x^{*},x_{n+1}})+\beta\mathfrak{G(x_{n+1}},\Psi \mathfrak{x^{*}})\\
\\
&\le\alpha \mathfrak{G(x^{*},x_{n+1}})+\beta\mathfrak{G(}\Psi\mathfrak{ x_{n}},\Psi\mathfrak{x^{*}})\\
\\
&\le\alpha \mathfrak{G(x^{*},x_{n+1}})+\beta\Bigl[\lambda[\mathfrak{G(x_{n}},\Psi \mathfrak{x_{n}})+\mathfrak{G(x^{*}},\Psi\mathfrak{x^{*})}]\Bigr]\\
\\
&=\alpha \mathfrak{G(x^{*},x_{n+1}})+\lambda\beta\mathfrak{G(x_{n}},\mathfrak{x_{n+1}})+\lambda\beta\mathfrak{G(x^{*}},\Psi\mathfrak{x^{*})}\\
\\
(1-\lambda\beta)\mathfrak{G(x^{*}},\Psi\mathfrak{x^{*}})&\le\alpha \mathfrak{G(x^{*},x_{n+1}})+\lambda\beta\Bigl[\alpha \mathfrak{G(x_{n},x^{*}})+\beta\mathfrak{G(x^{*}},\mathfrak{x_{n+1}})\Bigr]\\
\\
&=\alpha \mathfrak{G(x^{*},x_{n+1}})+\lambda\alpha\beta \mathfrak{G(x_{n},x^{*}})+\lambda\beta^{2}\mathfrak{G(x^{*}},\mathfrak{x_{n+1}}).
\end{align*}
Therefore,
\begin{align*}
\mathfrak{G(x^{*}},\Psi\mathfrak{x^{*}})\le\frac{(\alpha+\beta^{2}\lambda)}{(1-\lambda\beta)} \mathfrak{G(x^{*},x_{n+1}})+\frac{\alpha\beta\lambda}{(1-\lambda\beta)} \mathfrak{G(x_{n},x^{*}}).
\end{align*}
Taking $\lim_{n\to\infty}$, we get\\
\\
$\lim_{n\to\infty}\mathfrak{G(x^{*}},\Psi\mathfrak{x^{*}})=0.$\\
$\Rightarrow \mathfrak{x^{*}}=\Psi\mathfrak{x^{*}}.$\\
Therefore, $\mathfrak{x^{*}}$ is a limit point of $\Psi$.\\

$\textbf{Uniqueness of Fixed Point:}$\\
Suppose  $\mathfrak{y^{*}}$ be another fixed point of $\Psi$, then we have
\begin{align*}
 \Psi(\mathfrak{y^{*})=y^{*}} and~\mathfrak{G(x^{*},y^{*})}&=\mathfrak{G}(\Psi\mathfrak{x^{*}}, \Psi \mathfrak{y^{*}})\\
 \\
 &\le K\Bigl[\mathfrak{G(x^{*}},\Psi \mathfrak{x^{*}})+\mathfrak{G(y^{*}},\Psi\mathfrak{y^{*})}\Bigr]\\
 \\
 &\le K \mathfrak{G(x^{*}}, \mathfrak{x^{*}})+K\mathfrak{G(y^{*}},\mathfrak{y^{*})}.\\
\end{align*}
 Since $\mathfrak{x^{*}}$ and $\mathfrak{y^{*}}$ are fixed points. Therefore we have\\
  \begin{align*}
\mathfrak{G(x^{*},y^{*})}\le0.
\end{align*}
Thus, $\mathfrak{x^{*}=y^{*}}$ and hence $\mathfrak{x^{*}}$ is a unique fixed point of $\Psi$.
\end{proof}

\begin{Remark}
If we take $\xi_{1}=0$ and $\xi_{2}=\lambda$ in Theorem \ref{11}, then~$Theorem~\ref{11}\Rightarrow Theorem~\ref{111}.$Therefore, we can conclude that a Kannan-type contraction is a special case of the $(\alpha,\beta)$-contraction.
\end{Remark}
\section{Applications to Fredholm integral and non-linear differential equations}\label{sec4}
In this section, as applications of Corollary \ref{22}, we discuss the existence of solution for the Fredholm integral and non-linear differential equations:
\subsection{Application to Fredholm integral equation:}
Integral equations have many applications in almost every field of science and engineering to explain phenomena such as signal processing, population dynamics, heat transfer, and control systems. Many initial and boundary-value problems can be simply transformed into integral equations. Because the subject has numerous applications, it has piqued the interest of many researchers in the past where they solved nonlinear Fredholm integral equations numerically by utilizing various approximate methods (see \cite{1010,1011}). Alturk \cite{1012} presented a simple and effective method for obtaining solutions for a rather wide class of Fredholm integral equations of the second kind. In 2016, Li and Huang \cite{1013} developed a novel approach to solve nonlinear Fredholm integral equations. Recently, Tiwari and Patel \cite{1014}, studied the existence and uniqueness of a solution to the nonlinear Fredholm integral equations.\\
\\ 
Let $\mathcal{X}=C([m,n],\mathbb{R}^{+})$ be the space of all continuous real valued function defined on $[m,n]$ with the $(\alpha,\beta)$-metric defined by
\begin{align*}
\mathfrak{G(u,v)}=Sup_{\mathfrak{t}\in[m,n]} \big|\mathfrak{u(t)-v(t)}\big|~\forall ~\mathfrak{u,v}\in\mathcal{X}.
\end{align*}
It is evident that $(\mathcal{X},\mathfrak{G})$ is a complete $(\alpha,\beta)$-metric space.\\
\\
Consider the Fredhlom Integral Equation
\begin{equation}\label{10}
\mathfrak{u(s)}=\int_{m}^{n}\mathfrak{F(s,t,u(t))}d\mathfrak{t},
\end{equation}
where $m,n\in\mathbb{R},\mathfrak{u}\in\mathcal{X}~and~\mathfrak{F}:[m,n]^{2}\times \mathbb{R}\rightarrow\mathbb{R}^{+}$ is continuous.\\
\\
Now, Define a mapping $\Psi:\mathcal{X\rightarrow X}$ by
\begin{align*}
\Psi\mathfrak{u(s)}=\int_{m}^{n}\mathfrak{F(s,t,u(t))}d\mathfrak{t},~\mathfrak{s,t}\in[m,n].
\end{align*}
Suppose that the kernal $\mathfrak{F}$ satisfies
\begin{align*}
\big|\mathfrak{F(s,t,u(t))}-\mathfrak{F(s,t,v(t))}\big|\le \Lambda\big|\mathfrak{u(t)-v(t)}\big|,
\end{align*}
for some constant $\Lambda>0~and~\forall~\mathfrak{s,t}\in[m,n],~\mathfrak{u,v}\in\mathbb{R}.$\\
Now for $\mathfrak{u,v}\in\mathcal{X}$ and $\mathfrak{s,t}\in[m,n]$, we have
\begin{align*}
\big|\Psi\mathfrak{u(s)}-\Psi\mathfrak{v(s)}\big|&=\bigg|\int_{m}^{n}\mathfrak{F(s,t,u(t))}d\mathfrak{t}-\int_{m}^{n}\mathfrak{F(s,t,v(t))}d\mathfrak{t}\bigg|\\
\\
&=\int_{m}^{n}\big|(\mathfrak{F(s,t,u(t))}-\mathfrak{F(s,t,v(t))})\big|d\mathfrak{t}\\
\\
&\le\Lambda\int_{m}^{n}\big|\mathfrak{u(t)-v(t)}\big|d\mathfrak{t}.
\end{align*}
Taking Supremum over $\mathfrak{t}\in[m,n]$, we get
\begin{align*}
Sup_{\mathfrak{t}\in[m,n]}\big|\Psi\mathfrak{u(s)}-\Psi\mathfrak{v(s)}\big|&\le Sup_{\mathfrak{t}\in[m,n]}\Lambda\int_{m}^{n}\big|\mathfrak{u(t)-v(t)}\big|d\mathfrak{t}\\
\\
\mathfrak{G}(\Psi\mathfrak{u},\Psi\mathfrak{v})&\le\Lambda(n-m)\mathfrak{G(u,v)}.
\end{align*}
Hence, all the hypothesis of Corollary \ref{22}, are satisfied. Therefore $\Psi$ has a unique fixed point, which is the solution of equation \ref{10}.
\subsection{Application to non-linear differential equation:}
Nonlinear differential equations play a fundamental role in science and engineering, as they describe complex phenomena that linear equations cannot capture. These equations have received a lot of interest nowadays for use in a variety of fields. Specifically, cantilever structures are studied using a mathematical model based on a nonlinear differential equation \cite{1015,1016} with shifting singular points. In \cite{1017}, a third-order differential equation is examined in an implicit form in order to investigate wave processes in elastic beams. In 2019, Imad et al. \cite{1018} examine the existence of solution of a second order ordinary differential equation. In this work, we have find the existence and uniqueness of non-linear differential equation by using fixed point results.\\
\\
Consider the initial value problem for the first order non-linear ordinary differential equation:
\begin{equation}\label{12}
\mathfrak{u'(s)}=f(\mathfrak{s,u(s)}), \mathfrak{u(s_{\circ})=r_{\circ}}.
\end{equation}
equation \ref{12} can be rewritten as an integral equation:
\begin{align*}
\mathfrak{u(s)}=\mathfrak{r_{\circ}}+\int_{\mathfrak{s_{\circ}}}^{\mathfrak{s}}f(\mathfrak{t,u(t)})d\mathfrak{t}.
\end{align*}
Let $\mathcal{X}=C([\mathfrak{s_{\circ}-h},\mathfrak{s_{\circ}+h}],\mathbb{R}^{+})$ be the space of continuous function defined on $[\mathfrak{s_{\circ}-h},\mathfrak{s_{\circ}+h}]$ with $(\alpha,\beta)$-metric defined by
\begin{align*}
\mathfrak{G(u_{1},u_{2})}=Sup_{\mathfrak{s}\in[\mathfrak{s_{\circ}-h},\mathfrak{s_{\circ}+h}]} \big|\mathfrak{u_{1}(s)-u_{2}(s)}\big|~\forall ~\mathfrak{u_{1},u_{2}}\in\mathcal{X}.
\end{align*}
Now, Define a mapping $\Psi:\mathcal{X\rightarrow X}$ by
\begin{align*}
\Psi\mathfrak{u(s)}=\mathfrak{r_{\circ}}+\int_{\mathfrak{s_{\circ}}}^{\mathfrak{s}}f(\mathfrak{t,u(t)})d\mathfrak{t}.
\end{align*}
Suppose that $f(\mathfrak{s,u})$ satisfies
\begin{align*}
\big|f(\mathfrak{s,u_{1})}-f(\mathfrak{s,u_{2})}\big|\le \Lambda\big|\mathfrak{u_{1}-u_{2}}\big|,
\end{align*}
for some constant $\Lambda>0~and~\forall~\mathfrak{s}\in[\mathfrak{s_{\circ}-h},\mathfrak{s_{\circ}+h}],~\mathfrak{u_{1},u_{2}}\in\mathbb{R}.$\\
\\
Now for $\mathfrak{u_{1},u_{2}}\in\mathcal{X}$, we have
\begin{align*}
\big|\Psi\mathfrak{u_{1}(s)}-\Psi\mathfrak{u_{2}(s)}\big|&=\bigg|\int_{\mathfrak{s_{\circ}}}^{\mathfrak{s}}f(\mathfrak{t,u_{1}(t))}d\mathfrak{t}-\int_{\mathfrak{s_{\circ}}}^{\mathfrak{s}}f(\mathfrak{t,u_{2}(t))}d\mathfrak{t}\bigg|\\
\\
&=\int_{\mathfrak{s_{\circ}}}^{\mathfrak{s}}\big|f(\mathfrak{t,u_{1}(t))}-f(\mathfrak{t,u_{2}(t))}\big|d\mathfrak{t}\\
\\
&\le\Lambda\int_{\mathfrak{s_{\circ}}}^{\mathfrak{s}}\big|\mathfrak{u_{1}(t)-u_{2}(t)}\big|d\mathfrak{t}.
\end{align*}
Taking Supremum over $\mathfrak{s}\in[\mathfrak{s_{\circ}-h},\mathfrak{s_{\circ}+h}]$, we get
\begin{align*}
Sup_{\mathfrak{s}\in[\mathfrak{s_{\circ}-h},\mathfrak{s_{\circ}+h}]}\big|\Psi\mathfrak{u_{1}(s)}-\Psi\mathfrak{u_{2}(s)}\big|&\le Sup_{\mathfrak{s}\in[\mathfrak{s_{\circ}-h},\mathfrak{s_{\circ}+h}]}\Lambda\int_{\mathfrak{s_{\circ}}}^{\mathfrak{s}}\big|\mathfrak{u_{1}(t)-u_{2}(t)}\big|d\mathfrak{t}.\\
\\
\mathfrak{G}(\Psi\mathfrak{u_{1}},\Psi\mathfrak{u_{2}})&\le\Lambda\mathfrak{hG(u_{1},u_{2})}.
\end{align*}
Thus, $\Psi$ is a contraction. Therefore, by Corollary \ref{22} there exist a unique fixed point which is a solution to the equation \ref{12}.
\section{Conclusions}\label{sec5}
In this paper, we introduced the definition of $(\alpha,\beta)$-metric space and by providing an example we have shown that every $(\alpha,\beta)$-metric space need not be a metric space. Furthermore, we defined some new contraction mappings named $(\alpha,\beta)$-contraction and weak $(\alpha,\beta)$-contraction and explores their role in $(\alpha,\beta)$-metric spaces by presenting fixed point results. Moreover, we proved Kannan's fixed point theorem and Reich's fixed point theorem in the setting of $(\alpha,\beta)$-metric spaces. It focuses on building a theoretical framework for fixed-point theorems and have application in solving various types of integral equations and to establish the existence and uniqueness of a solution of the non-linear differential equations. This work improves our understanding of fixed-point theory and serves as a strong base for future research as the established theorems can be extended to multivalued mappings in the setting of $(\alpha,\beta)$-metric spaces. Furthermore, One can obtain the common fixed point theorems for these novel contraction mappings.

 \end{document}